\definecolor{shadecolor}{gray}{0.85}
\newcommand{\arraystrech}[5]
\renewcommand{\arraystretch}{1.4}
\newcommand{\norm}[1]{\left\lVert#1\right\rVert}
\newcommand{\DF}[2]{ \displaystyle \frac{#1}{#2} }
\newtheorem{thm}{Theorem}
\newtheorem{lem}{Lemma}
\newtheorem{rem}{Remark}
\newtheorem{defn}{Definition}
\DeclareSymbolFont{largesymbol}{OMX}{yhex}{m}{n}
\DeclareMathAccent{\widehat}{\mathord}{largesymbol}{"62}
\begin{document}
\pagestyle{plain}

\title{A Fourier-Chebyshev Spectral Method for \\ Cavitation Computation
in Nonlinear Elasticity \thanks{The research was supported by the NSFC
projects 11171008 and 11571022.}}

\author{Liang Wei, \hspace{1mm} Zhiping Li\thanks{Corresponding author,
email: lizp@math.pku.edu.cn} \\ {\small LMAM \& School of Mathematical
Sciences, Peking University, Beijing 100871, China}}

\date{}

\maketitle

\begin{abstract}
A Fourier-Chebyshev spectral method is proposed in this paper for solving the
cavitation problem in nonlinear elasticity.
The interpolation error for the cavitation solution is analyzed, the elastic
energy error estimate for the discrete cavitation solution is obtained,
and the convergence of the method is proved.
An algorithm combined a gradient type method with a damped quasi-Newton method
is applied to solve the discretized nonlinear equilibrium equations.
Numerical experiments show that the Fourier-Chebyshev spectral method is
efficient and capable of producing accurate numerical cavitation solutions.
\end{abstract}

\noindent \textbf{Key words}: Fourier-Chebyshev spectral method, cavitation, nonlinear elasticity, interpolation error analysis, energy error estimate, convergence.

\section{Introduction}

In 1958, Gent and Lindley \cite{GentLindley1958} established the well known defective
model for the cavitation in nonlinear elasticity characterizing the phenomenon as
material instability associated to the dramatic growth of pre-existing micro voids
under large hydrostatic tensions, which very well matched the experimental observation
of sudden void formation in vulcanized rubber. Using the defective model, Gent et.al.
\cite{GentPark1984}, Lazzeri et.al. \cite{Bucknall1993,Bucknall1994}, and many other
researchers studied the cavitation phenomenon in elastomers containing rigid spherical
inclusions as well as in the standard model problems.
In 1982, Ball \cite{Ball1982} established the famous perfect model, in which cavitations
form in an originally intact body as an absolute energy minimizing bifurcation solution,
and produced the same cavitation criterion. The profound relationship of the two models
are studied by Sivaloganathan et.al. \cite{Sivaloganathan2002,Sivaloganathan2006} and
Henao \cite{Henao2009}.

Since the perfect model is known to be seriously challenged by the Lavrentiev phenomenon
\cite{Lavrentiv}, the defective model is chosen by most researchers in numerical studies
of the cavitation phenomenon, using mainly a variety of the finite element methods
(see Xu and Henao \cite{XuHenao}, Lian and Li \cite{LianLi2011dual,LianLi2011force},
Su and Li \cite{SuLi} among many others).
A spectral collocation method \cite{NegronMarrero}, which approximates the cavitation
solution with truncated Fourier series in the circumferential direction and finite differences
in the radial direction, is also found some success.

In a typical 2-dimensional defective model with a prescribed displacement boundary
condition, one considers to minimize the stored energy of the form
\begin{align}
\label{note:E}
E( \mathbf{u} ) = \int_{\Omega_\varepsilon}
	W( \nabla \mathbf{u} (\mathbf{x}) ) \mathrm{d} \mathbf{x} ,
\end{align}
in the set of admissible deformations
\begin{align}
\label{note:A_epsl}
\mathcal{A}_\varepsilon = \left\{
    \mathbf{u} \in W^{1,p}( \Omega_\varepsilon ) :
    \mathbf{u}~\text{is one-to-one}~ a.e.,~ \mathbf{u} \left|_{\partial \Omega}
    \right.=\mathbf{u}_0,~ \det \nabla \mathbf{u} > 0 ~a.e.
    \right\} ,
\end{align}
where $\Omega_\varepsilon = \Omega \setminus \overline{\mathbb{B}_\varepsilon
(\mathbf{x}_0 )} \subset \mathbb{R}^2$ is a domain occupied by the compressible
hyperelastic material in its reference configuration, with $\Omega$ being a
regular simply-connected domain and $\mathbb{B}_\varepsilon ( \mathbf{x}_0 )
=\{\mathbf{x} \in \mathbb{R}^2: |\mathbf{x}|< \varepsilon \}$ being a pre-existing
circular defect of radius $\varepsilon \ll 1$ centered at $\mathbf{x}_0$, and where
$W:\mathbb{M}_{+}^{2\times 2} \rightarrow \mathbb{R}^{+}$ is the stored energy
density function of the hyperelastic material, and $\mathbb{M}_{+}^{2\times 2}$
denotes the set of $2\times 2$ matrices with positive determinant.

The Euler-Lagrange equation of the above minimization problem is the following
displacement/traction boundary value problem:
\begin{equation}
\label{EL:original}
\begin{cases}
\mathrm{div} \displaystyle \frac{\partial W (\nabla \mathbf{u})}{\partial \nabla \mathbf{u}}
= \mathbf{0}, & \text{in}~ \Omega_\varepsilon ;  \vspace*{3mm} \\
\displaystyle \frac{\partial W (\nabla \mathbf{u})}{\partial \nabla \mathbf{u}} \cdot \mathbf{n}
= \mathbf{0}, &\text{on}~ \partial \mathbb{B}_\varepsilon( \mathbf{x}_0 );\vspace*{1mm} \\
\mathbf{u} (\mathbf{x}) = \mathbf{u}_0 (\mathbf{x}),
& \text{on}~\partial \Omega,
\end{cases}
\end{equation}
where $\mathbf{n}$ is the unit exterior normal with respect to $\Omega_\varepsilon$.

In the present paper, without loss of generality \cite{Ball1982,PietroStefania2013},
we consider the stored energy density function $W(\cdot)$ of the form
\begin{equation}
\label{note:W}
W( \nabla \mathbf{u} ) = \kappa | \nabla \mathbf{u} |^p
    + h( \det \nabla \mathbf{u} ), \qquad
    \nabla\mathbf{u}\in \mathbb{M}_{+}^{2\times 2},~ 1<p<2 ,
\end{equation}
where $\kappa$ is a positive material constant, $|\cdot |$ denotes the Frobenius norm
of a matrix and $h \in \mathcal{C}^3( (0,+\infty) )$ is a strictly convex function satisfying
\begin{equation}
\label{note:h}
h(t)\rightarrow +\infty ~\text{as}~ t \rightarrow 0^+, ~\text{and}~
\frac{h(t)}{t} \rightarrow +\infty ~\text{as}~ t \rightarrow +\infty .
\end{equation}
Since the cavitation solution is generally considered to have high regularity except
in a neighborhood of the defects, where the material experiences large expansion dominant
deformations, we restrict ourselves to a simplified reference configuration
$\Omega_{(\varepsilon,\gamma)} = \mathbb{B}_\gamma( \mathbf{0}) \setminus
\overline{\mathbb{B}_\varepsilon( \mathbf{0}) }$ $(0<\varepsilon \ll \gamma \leq 1)$, and denote
\begin{equation}\begin{split}
\label{note:A_epsl_gamma}
\mathcal{A}_{(\varepsilon,\gamma)}( \mathbf{u}_0 ) = \{
    \mathbf{u} \in W^{1,p}( \Omega_{(\varepsilon,\gamma)} ) :~
\mathbf{u}~\text{is one-to-one}~ a.e.,~&
    \mathbf{u} \left|_{\partial \mathbb{B}_\gamma( \mathbf{0})} \right.
        = \mathbf{u}_0, \\
& \det \nabla \mathbf{u} > 0 ~a.e. \} .
\end{split}\end{equation}

Taking the advantages of the smoothness of the cavitation solutions in the defective
model when $\mathbf{u}_0$ is sufficiently smooth and the high efficiency and accuracy
of spectral methods in approximating smooth solutions of partial differential equations
(see Li and Guo \cite{LiGuo}, Shen \cite{Shen,ShenTang} etc.),
we develop a Fourier-Chebyshev spectral method to solve the
Euler-Lagrange equation \eqref{EL:original}, which approximates the cavitation solution
with truncated Fourier series in the circumferential direction and truncated Chebyshev
series in the radial direction. The interpolation error for the cavitation solution is
analyzed, the elastic energy error estimate for the discrete cavitation solution is
derived, and the convergence of the method is proved.
An algorithm combined a gradient type method with a damped quasi-Newton method
is applied to solve the discretized nonlinear equilibrium equations.
Numerical experiments show that the Fourier-Chebyshev spectral method is efficient
and capable of producing highly accurate numerical cavitation solutions.
We would like to point out here, even though the reference domain is restricted to
a circular ring $\Omega_{(\varepsilon,\gamma)}$, to further exploring its highly
efficient feature in a neighborhood of a cavity surface, our method can be coupled
with a domain decomposition method, especially in combining with some finite element
methods to extend the application to more general situations
with multiple pre-existing tiny voids.

The structure of the rest of the paper is as follows.
In \S 2, we rewrite the Euler-Lagrange equation of the cavitation problem in
a proper computing coordinates. In \S 3, the Fourier-Chebyshev spectral method is
applied, the corresponding discrete equilibrium equation is derived, and an algorithm
to solve the nonlinear equation is presented. \S 4 is devoted to
the analysis of the interpolation error of the cavitation solution, the elastic energy
error bound and the convergence of the discrete cavitation solution.
In \S 5, numerical experiments and results are presented to show the efficiency
and accuracy of our method.

\section{The Euler-Lagrange Equation}

In the Cartesian coordinate system, an admissible deformation
$\mathbf{u} \in \mathcal{A}_{(\varepsilon,\gamma)}(\mathbf{u}_0)$ is written as $\mathbf{u}( \mathbf{x} )
= \left[ u_1(x_1,x_2), u_2(x_1,x_2) \right]^T $. Denote
\begin{align}
\label{note:DFg}
D(\mathbf{u}) := \det \nabla \mathbf{u}, \quad
F(\mathbf{u}) := \frac{1}{2} |\nabla \mathbf{u}|^2, \quad
g(t) := \kappa \left( \sqrt{2t} \right)^p,
\end{align}
and to futher simplify the notation, $D(\mathbf{u})$ and $F(\mathbf{u})$ will be denoted
below as $D$, $F$ wherever no ambiguity is caused.
For the elastic energy density function $W(\cdot)$ given by \eqref{note:W} and the elastic energy $E(\cdot)$ given by \eqref{note:E}, we have
\begin{align}
\label{note:E_u}
E( \mathbf{u} ) = \int_{\Omega_{(\varepsilon,\gamma)}}
    \left[ g\left( F(\mathbf{u}) \right) +
            h\left( D(\mathbf{u}) \right) \right] \mathrm{d} \mathbf{x} .
\end{align}

For the convenience of the implementation of the Fourier-Chebyshev spectral method,
we introduce a $(\rho,\phi)$-coordinate system defined on the computational domain
$\Omega' :=(-1,1) \times (0,2\pi)$, by coupling the Cartesian to polar coordinates transformation
\begin{equation}
\begin{cases}\label{coordinate:polar}
x_1 = r \cos \theta, \\
x_2 = r \sin \theta,
\end{cases} \text{and} \quad
\begin{cases}
u_1 = R(r,\theta) \cos \Theta(r,\theta), \\
u_2 = R(r,\theta) \sin \Theta(r,\theta),
\end{cases}
\end{equation}
defined on the domain $(\varepsilon,\gamma) \times (0,2\pi)$, with a
transformation defined by
\begin{equation}
\begin{cases}\label{coordinate:rho_phi}
r = \frac{\gamma+\varepsilon}{2}+\frac{\gamma-\varepsilon}{2} \rho,\\
\theta = \phi,
\end{cases} \text{and} \quad
\begin{cases}
R(r,\theta) = P(\rho,\phi), \\
\Theta(r,\theta) = Q(\rho,\phi)+\phi,
\end{cases}
\end{equation}
defined on the computational domain
$\Omega' =(-1,1) \times (0,2\pi)$.

In $(\rho,\phi)$-coordinates, $D(\mathbf{u})= \det \nabla \mathbf{u}$,
$F(\mathbf{u})= |\nabla \mathbf{u}|^2 /2$ defined in \eqref{note:DFg} can be rewritten as functions of $P(\rho,\phi)$, $Q(\rho,\phi)$:
\begin{subequations}
\label{note:DF}
\begin{align}
\label{note:D}
D(P,Q) &= \frac{\rho_r}{r} P \left[ P_\rho (Q_\phi+1) - P_\phi Q_\rho \right] ,\\
F(P,Q) &= \frac{\rho_r^2}{2} ( P_\rho^2 + P^2 Q_\rho^2 )
    + \frac{1}{2 r^2} \left[ P_\phi^2 + P^2 (Q_\phi+1)^2 \right] ,
\end{align}
\end{subequations}
where $\rho_r = 2/(\gamma-\varepsilon)$;
the elastic energy $E(\mathbf{u})$ in \eqref{note:E_u} can be expressed as
\begin{align}
\label{energy}
E( P,Q ) = \int_{\Omega'}
	\left[ g\left( F(P,Q) \right) + h\left( D(P,Q) \right) \right]
    \frac{r}{\rho_r} \mathrm{d} \rho \mathrm{d} \phi,
\end{align}
and the set of admissible deformation $\mathcal{A}_{(\varepsilon,\gamma)}( \mathbf{u}_0 )$
(see \eqref{note:A_epsl_gamma}) is reformulated as
\begin{equation}\begin{split}
\mathcal{A}_{\Omega'}( \mathbf{u}_0 ) = \{ (P,Q):~
\exists~ \mathbf{u} \in \mathcal{A}_{(\varepsilon,\gamma)}( \mathbf{u}_0 ), \,
&   \text{{\it s.t.} }
    \mathbf{u}~\text{is mapped to}~(P,Q) \\
& \text{by the transformations \eqref{coordinate:polar} and
    \eqref{coordinate:rho_phi}} \}.
\end{split}\end{equation}
Thus, in $(\rho,\phi)$-coordinates, the cavitation solution
$(P,Q)\in \mathcal{A}_{\Omega'}( \mathbf{u}_0 )$ is
characterized as the minimizer
of $E( P,Q )$ in $\mathcal{A}_{\Omega'}( \mathbf{u}_0 )$, {\it i.e.}
\begin{align}
\label{problem}
(P,Q)=\arg\min_{ (P,Q)\in \mathcal{A}_{\Omega'}( \mathbf{u}_0 ) } E(P,Q),
\end{align}
or alternatively, as the solution to the Euler-Lagrange equation of \eqref{problem}:
\begin{align}
\label{EL:body}
\left\{\begin{aligned}
    \int_{\Omega'} f_1(P,Q;\bar{P}) \mathrm{d} \rho \mathrm{d} \phi=0,\\
    \int_{\Omega'} f_2(P,Q;\bar{Q}) \mathrm{d} \rho \mathrm{d} \phi=0,
\end{aligned}\right. \quad
\forall~(\bar{P},\bar{Q}) \in \mathcal{A}_{\Omega'}( \mathbf{0} ),
\end{align}
where, by the definition and direct calculations, we have
\begin{align*}
f_1 &:= \bar{P} \left[
    g'(F)\left( r\rho_r P Q_\rho^2 + \frac{1}{r\rho_r} P (Q_\phi+1)^2 \right)
    + h'(D) \left( P_\rho (Q_\phi+1) - P_\phi Q_\rho \right) \right] \\
&\qquad + \bar{P}_\rho \left[ g'(F) r\rho_r P_\rho
        + h'(D) P (Q_\phi+1) \right]
    + \bar{P}_\phi \left[ g'(F) \frac{1}{r\rho_r}  P_\phi
        - h'(D) P Q_\rho \right] ,\\
f_2 &:= \bar{Q}_\rho P \left[ g'(F) r\rho_r P Q_\rho
        - h'(D) P_\phi \right]
    + \bar{Q}_\phi P \left[ g'(F) \frac{1}{r\rho_r} P (Q_\phi+1)
        + h'(D) P_\rho \right].
\end{align*}

\section{The Fourier-Chebyshev Spectral Method}

To discretize the Euler-Lagrange equation \eqref{EL:body} defined on
$\Omega' =(-1,1) \times (0,2\pi)$ in $(\rho,\phi)$-coordinates, we first approximate
the unknowns $(P(\rho,\phi), Q(\rho,\phi))$ by the finite Fourier-Chebyshev polynomials:
\begin{subequations}
\label{note:PQ_NM}
\begin{align}
P^{NM}(\rho,\phi) &:= \sum_{j=0}^M \left(
    \sum_{k=0}^{N/2} \alpha_{k,j} \cos k\phi +
    \sum_{k=1}^{N/2-1} \beta_{k,j} \sin k\phi \right) T_j(\rho), \vspace*{2mm} \\
\label{note:Q_NM}
Q^{NM}(\rho,\phi) &:= \sum_{j=0}^M \left(
    \sum_{k=0}^{N/2}   \xi_{k,j}  \cos k\phi +
    \sum_{k=1}^{N/2-1} \eta_{k,j} \sin k\phi \right) T_j(\rho),
\end{align}
\end{subequations}
where $T_j$ is the Chebyshev polynomial of the first kind of degree $j$, defined as
\[ T_j(x) = \cos(j \arccos x), \]
with $T_0(x) = 1, T_1(x) = x$ and satisfying the recurrence relation \cite{ShenTang}
\[ T_{j+1}(x) = 2x T_j(x) - T_{j-1}(x), \quad j\geq 1. \]

\begin{rem}
We use the trigonometric polynomials to approximate $Q=\Theta-\theta$ instead of
$\Theta$ (see \eqref{coordinate:rho_phi} and \eqref{note:Q_NM}) so that the
Gibbs phenomenon can be avoided (see e.g. \cite{Gibbs}), since the
periodic extension of $Q=\Theta-\theta$ from $[0, 2\pi)$ to $\mathbb{R}^1$ is smooth,
while that of $\Theta$ is a sawtooth function with jump discontinuities at $2k\pi$,
$k=0,1,\dotsc$.
\end{rem}

The discretized problem of solving the Euler-Lagrange equation \eqref{EL:body} is then read as:
find $(P^{NM},Q^{NM}) \in \mathcal{B}^{NM}$ such that
\begin{align}
\label{EL:final}
\left\{\begin{aligned}
   \int_{\Omega'} f_1(P^{NM},Q^{NM};\bar{P})
        \mathrm{d} \rho \mathrm{d} \phi=0,\\
   \int_{\Omega'} f_2(P^{NM},Q^{NM};\bar{Q})
        \mathrm{d} \rho \mathrm{d} \phi=0,
\end{aligned}\right.
\quad \forall~(\bar{P},\bar{Q}) \in \mathcal{B}^{NM}_0,
\end{align}
where $\mathcal{B}^{NM}$ and $\mathcal{B}^{NM}_0$ are the discrete trial and
test function spaces defined as
\begin{subequations}
\label{note:space}
\begin{align}
\mathcal{B}^{NM} &:= \left\{ (P^{NM},Q^{NM}):\text{the Fourier-Chebyshev polynomials
\eqref{note:PQ_NM} satisfying} \right. \nonumber\\
&\qquad \left. P^{NM}(1,\phi_n) = P_0(1,\phi_n),~
    Q^{NM}(1,\phi_n) = Q_0(1,\phi_n),~ 0\leq n\leq N-1 \right\}, \label{note:B_NM} \\
\mathcal{B}^{NM}_0 &:= \left\{ (P^{NM},Q^{NM}):\text{the Fourier-Chebyshev polynomials
\eqref{note:PQ_NM} satisfying} \right. \nonumber\\
&\qquad\qquad\qquad\qquad\;\; \left. P^{NM}(1,\phi_n) = 0,~
    Q^{NM}(1,\phi_n) = 0,~ 0\leq n\leq N-1 \right\},
\end{align}
\end{subequations}
where, in \eqref{note:space}, $\phi_n = 2\pi n/N,~ 0\leq n\leq N-1$,
and the Dirichlet boundary condition $(P_0(1,\phi_n), Q_0(1,\phi_n))$ is defined by
$\mathbf{u}_0$ via the coordinates transformations \eqref{coordinate:polar} and
\eqref{coordinate:rho_phi}.
To solve the equation \eqref{EL:final} numerically, we need to replace the integrals
in \eqref{EL:final} by proper numerical quadratures.
Let $\left\{ \rho_{m'}, \omega^C_{m'} \right\}_{m'=0}^{M'}$ and $\left\{ \phi_{n'},
\omega^F_{n'} \right\}_{n'=0}^{N'-1}$ be the sets of Gauss-Chebyshev and Fourier
quadrature nodes and weights respectively, {\it i.e.} \cite{ShenTang}
\begin{align}
\begin{array}{lll}
\rho_{m'} = \cos \DF{(2m'+1) \pi}{2M'+2}, & \omega^C_{m'} = \DF{\pi}{M'+1},
    & 0\leq m'\leq M', \\
\phi_{n'} = \DF{2\pi n'}{N'}, & \omega^F_{n'} = \DF{2\pi}{N'},
    & 0\leq n'\leq N'-1,
\end{array}
\end{align}
then we are led to the following discretized Euler-Lagrange equation:
find $(P^{NM},Q^{NM}) \in \mathcal{B}^{NM}$
such that for all $(\bar{P},\bar{Q}) \in \mathcal{B}^{NM}_0$
\begin{align}
\label{EL:final_D}
\left\{\begin{aligned}
   \sum_{n'=0}^{N'-1} \sum_{m'=0}^{M'} f_1(P^{NM}(\rho_{m'}, \phi_{n'}),Q^{NM}(\rho_{m'},
   \phi_{n'});\bar{P}(\rho_{m'}, \phi_{n'})) \sqrt{1-\rho^2_{m'}}
        \omega^C_{m'} \omega^F_{n'}=0,\\
   \sum_{n'=0}^{N'-1} \sum_{m'=0}^{M'} f_2(P^{NM}(\rho_{m'}, \phi_{n'}),Q^{NM}(\rho_{m'},
   \phi_{n'});\bar{Q}(\rho_{m'}, \phi_{n'})) \sqrt{1-\rho^2_{m'}}
        \omega^C_{m'} \omega^F_{n'}=0.
\end{aligned}\right.
\end{align}

Let $\{a_k,b_k\}$ and $\{c_k,d_k\}$ be the discrete Fourier coefficients
of $P_0(1,\phi)$ and $Q_0(1,\phi)$ respectively, then
the boundary condition in \eqref{note:B_NM} can be expressed as
\begin{align}
\label{note:alpha_k0}
\begin{array}{cc}
    \begin{aligned}
    \alpha_{k,0} &= -\sum_{j=1}^M \alpha_{k,j} + a_k,\\
    \beta_{k,0}  &=  -\sum_{j=1}^M \beta_{k,j} + b_k,
    \end{aligned}
    &
    \begin{aligned}
    \xi_{k,0} &= -\sum_{j=1}^M \xi_{k,j} + c_k,   \quad 0 \leq k \leq N/2 ,\\
    \eta_{k,0} &= -\sum_{j=1}^M \eta_{k,j} + d_k, \quad 1 \leq k \leq N/2-1.
    \end{aligned}
\end{array}
\end{align}
Noticing also that the following $N \times M$ functions
\begin{equation}
\label{note:basis}
\left\{ \cos k\phi \cdot (T_{j}(\rho)-1)
    \right\}_{1\leq j\leq M}^{0\leq k\leq N/2} ,\quad
\left\{ \sin k\phi \cdot (T_{j}(\rho)-1)
    \right\}_{1\leq j\leq M}^{1\leq k\leq N/2-1},
\end{equation}
form a set of
bases for $\mathcal{B}^{NM}_0$, we conclude that the discrete Euler-Lagrange
equation \eqref{EL:final_D} consists of $2NM$ nonlinear algebraic equations,
which, for the simplicity of the notations, will be denoted as
$\mathbf{f}( \mathbf{y} ) = \mathbf{0}$, with $2NM$ unknowns $\mathbf{y}=$
$\left\{ \alpha_{k,j}, \xi_{k,j} \right\}_{1 \leq j \leq M}^{0 \leq k \leq N/2} \cup$
$\left\{ \beta_{k,j}, \eta_{k,j} \right\}_{1 \leq j \leq M}^{1 \leq k \leq N/2-1}$.
Denote $E(\mathbf{y})$ as the discrete elastic energy defined by replacing the
integral in $E(P^{NM},Q^{NM})$ (see \eqref{energy}) with the numerical quadrature,
then $\mathbf{f}( \mathbf{y} )$ may be viewed as the gradient of the discrete
elastic energy $E(\mathbf{y})$.

In our numerical experiments, the discrete equilibrium equations
$\mathbf{f}( \mathbf{y} ) = \mathbf{0}$, {\it i.e.} \eqref{EL:final_D},
are solved by an algorithm combined a gradient type method with a damped quasi-Newton
method \cite{quasiNewton}. More specifically, we use a gradient type method,
which calculates a descent direction of the energy and conducts a incomplete
line search in each iteration, to provide an appropriate initial cavity deformation for
a damped quasi-Newton method with Broyden's correction, which will then
produce a reasonably accurate numerical cavity solution.
The algorithm is summarized as follows, where the determinant of
the deformation $(P^{NM},Q^{NM})$ corresponding to $\mathbf{y}$ is denoted as
$D(\mathbf{y}) := D(P^{NM},Q^{NM})$ (see \eqref{note:D}).

\vspace*{2mm}
{\bf Algorithm:}
\begin{description}
  \item[Step 1] Given $\mathbf{y}^G_0$, set $TOL = 10^{-1}$, compute
  $\mathbf{f}(\mathbf{y}^G_0)$ and $E(\mathbf{y}^G_0)$.
  \item[Step 2] If $TOL < 10^{-10}$, then output $\mathbf{y}^G_0$ and stop;
  else, set $t^G_{-1} = 1$ and $j:=0$.
  \item[Step 3] For $j\geq 0$, if $|\mathbf{f}(\mathbf{y}^G_j)| < TOL$,
  then go to Step 6; else, set $t^G_j = 4\cdot t^G_{j-1}$.
  \item[Step 4] Set $\mathbf{y}^G_{j+1} = \mathbf{y}^G_{j} - t^G_j \cdot
  \mathbf{f}(\mathbf{y}^G_j)$, compute $\mathbf{f}(\mathbf{y}^G_{j+1})$,
  $E(\mathbf{y}^G_{j+1})$ and $D(\mathbf{y}^G_{j+1})$.
  \item[Step 5] If $t^G_j < 10^{-16}$, then output $\mathbf{y}^G_j$ and stop;
      else if $E(\mathbf{y}^G_{j+1}) < E(\mathbf{y}^G_{j})$ and
      $D( \mathbf{y}^G_{j+1} ) >0$, then set $j:=j+1$ and go to Step 3;
      else, set $t^G_j := t^G_j /2$ and go to Step 4.
  \item[Step 6] Set $\mathbf{y}^N_0 := \mathbf{y}^G_j$, compute
  $\mathbf{f}(\mathbf{y}^N_0)$ and $\mathbf{B}_0 = [
  \nabla \mathbf{f}( \mathbf{y}^N_0 ) ]^{-1}$, set $t^N_{-1} = 1$ and $k:=0$.
  \item[Step 7] For $k\geq 0$, if $|\mathbf{f}(\mathbf{y}^N_k)| < 10^{-10}$,
  then output $\mathbf{y}^N_k$ as the solution and stop;
      else, set $t^N_k = 4\cdot t^N_{k-1}$.
  \item[Step 8] Set $\mathbf{y}^N_{k+1} = \mathbf{y}^N_{k} - t^N_k \
  cdot \mathbf{B}_k \cdot \mathbf{f}(\mathbf{y}^N_k)$, compute
  $\mathbf{f}(\mathbf{y}^N_{k+1})$ and $D(\mathbf{y}^N_{k+1})$.
  \item[Step 9] If $t^N_k < 10^{-16}$, then go to Step 2 with
  $\mathbf{y}^G_0 := \mathbf{y}^N_k$ and $TOL := TOL/10$;
      else if $|\mathbf{f}(\mathbf{y}^N_{k+1})| <
      |\mathbf{f}(\mathbf{y}^N_{k})|$ and $D(\mathbf{y}^N_{k+1}) >0$, then go to Step 10;
      else, set $t^N_k := t^N_k /2$ and go to Step 8.
  \item[Step 10] Compute $\mathbf{s}_k = \mathbf{y}^N_{k+1} -
  \mathbf{y}^N_k$, $\mathbf{z}_k = \mathbf{f}(\mathbf{y}^N_{k+1}) -
  \mathbf{f}(\mathbf{y}^N_k)$, and
      \begin{align}
        \mathbf{B}_{k+1} = \mathbf{B}_k + \frac{
        \left( \mathbf{s}_k - \mathbf{B}_k \mathbf{z}_k \right)
        \mathbf{s}_k^T \mathbf{B}_k }{ \mathbf{s}_k^T \mathbf{B}_k \mathbf{z}_k}.
      \end{align}
      Set $k:=k+1$ and go to Step 7.
\end{description}

\section{Error Analysis and the Convergence Theorem}

In this section, we analyze the interpolation error of the discrete Fourier-Chebyshev spectral method for the cavitation solutions, which will enable us to
derive the elastic energy error estimate for the discrete cavitation solution, and
prove the convergence of the method.

Before analyzing the interpolation error of a cavitation solution,
we first introduce some notations.
Let $\mathcal{B} := \{ (P,Q) \in C^1(\overline{\Omega'}): P(1,\phi)=P_0(1,\phi),~
Q(1,\phi)=Q_0(1,\phi)\}$, let $\mathcal{B}_{+} := \{ (P,Q) \in \mathcal{B}:
D(P,Q)>0\}$ (see \eqref{note:D}), and denote $\mathcal{B}^{NM}_+=\mathcal{B}^{NM}\cap \mathcal{B}_+$ (see \eqref{note:B_NM}).
Let $\omega(\rho) := (1-\rho^2)^{-1/2}$, $\Lambda := (0,2\pi)$, $I := (-1,1)$, and recall that $(\rho,\phi) \in \Omega' = I \times \Lambda$. For given integers $\sigma \geq 0$ and
$\mu \geq 0$, denote $H^\sigma_\omega(I) = \{ \psi: \norm{\psi}_{ H^\sigma_\omega(I) }
< \infty \}$ the weighted Hilbert space with the norm defined as
\begin{align*}
\norm{\psi}_{H^\sigma_\omega(I)} = \left( \sum^\sigma_{j=0} \int_I
        \left| \frac{d^j \psi}{d \rho^j} \right|^2 \omega
        \mathrm{d} \rho \right)^{1/2},
\end{align*}
and denote $H^\mu( \Lambda; H^\sigma_\omega(I) )= \{ v: \norm{v}_{ H^\mu( \Lambda;
H^\sigma_\omega(I) ) } < \infty \}$ the Hilbert space equipped with the norm defined as
\begin{align*}
\norm{v}_{ H^\mu( \Lambda; H^\sigma_\omega(I) ) } = \left( \sum^\mu_{k=0} \int_\Lambda
        \norm{ \frac{\partial^k v}{\partial \phi^k} }_{H^\sigma_\omega(I)}^2
        \mathrm{d} \phi \right)^{1/2}.
\end{align*}

\begin{defn}
\label{defn:INM}
Define the interpolation operator $I^{NM}: \mathcal{B} \rightarrow \mathcal{B}^{NM}$ as
$$
[I^{NM} (P, Q)](\rho_m,\phi_n)=(P,Q)(\rho_m,\phi_n), \;\;
\forall\; 0\leq n \leq N-1,~0\leq m\leq M,
$$
where $\rho_m = \cos( m\pi / M )$, $\phi_n=2n \pi /N$.
\end{defn}

The interpolation operator $I^{NM}$ is shown to have the following error
estimates (see Lemma~5 in \cite{LiGuo}).
\begin{lem} \cite{LiGuo}
\label{lem:Liguo}
If $v \in H^{\beta}(\Lambda;H^\sigma_\omega(I)) \cap H^{\mu}(\Lambda;H^\alpha_\omega(I))
\cap H^{\mu'}(\Lambda;H^{\sigma'}_\omega(I)),~ 0\leq \alpha \leq \sigma,\sigma',~
0\leq \beta \leq \mu,\mu',~ \sigma,\sigma' >\frac{1}{2} ~\text{and}~ \mu,\mu' >1$,
then, there exists a constant $c>0$
independent of $P$, $Q$, $M$ and $N$, such that
\begin{equation*}\begin{split}
\norm{I^{NM}v-v}_{H^{\beta}(\Lambda;H^\alpha_\omega(I))}
&\leq cM^{2\alpha-\sigma} \norm{v}_{H^{\beta}(\Lambda;H^\sigma_\omega(I))}
    + cN^{\beta-\mu} \norm{v}_{H^{\mu}(\Lambda;H^\alpha_\omega(I))} \\
&\qquad \qquad +cq(\beta) M^{2\alpha-\sigma'} N^{\beta -\mu'}
    \norm{v}_{H^{\mu'}(\Lambda;H^{\sigma'}_\omega(I))},
\end{split}\end{equation*}
where $q(\beta)=0$ for $\beta>1$ and $q(\beta)=1$ for $\beta \leq 1$.
\end{lem}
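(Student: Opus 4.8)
The plan is to exploit the tensor–product structure of the interpolation operator. Writing $I^M_\rho$ for the one–dimensional Chebyshev interpolation at the nodes $\{\rho_m=\cos(m\pi/M)\}$ and $I^N_\phi$ for the trigonometric interpolation at $\{\phi_n=2n\pi/N\}$, one has $I^{NM}=I^M_\rho I^N_\phi=I^N_\phi I^M_\rho$, the two factors commuting because they act on independent variables; in particular $I^M_\rho$ commutes with every $\partial_\phi^k$. The whole estimate then reduces to assembling two one–dimensional building blocks, both standard in the weighted spectral literature and quoted from \cite{LiGuo}: the Chebyshev estimate $\norm{I^M_\rho u-u}_{H^\alpha_\omega(I)}\le cM^{2\alpha-\sigma}\norm{u}_{H^\sigma_\omega(I)}$ for $0\le\alpha\le\sigma$, $\sigma>\tfrac12$, and the Fourier estimate $\norm{I^N_\phi w-w}_{H^\beta(\Lambda)}\le cN^{\beta-\mu}\norm{w}_{H^\mu(\Lambda)}$ for $0\le\beta\le\mu$, $\mu>\tfrac12$, together with the fact that the relevant interpolant is uniformly stable in the $H^\beta(\Lambda;\cdot)$ norm precisely when $\beta>1$.

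The backbone of the argument is the additive splitting of the tensor–product error into its radial, angular, and corner contributions,
\begin{align*}
I^{NM}v-v=(I^M_\rho-\mathrm{Id})v+(I^N_\phi-\mathrm{Id})v+(I^M_\rho-\mathrm{Id})(I^N_\phi-\mathrm{Id})v,
\end{align*}
which is verified by expanding $(I^M_\rho-\mathrm{Id})(I^N_\phi-\mathrm{Id})=I^M_\rho I^N_\phi-I^M_\rho-I^N_\phi+\mathrm{Id}$. For the first summand, since $I^M_\rho$ commutes with $\partial_\phi^k$, I would apply the radial Chebyshev estimate in $\rho$ for each fixed $\phi$ and each $k\le\beta$ and then integrate over $\Lambda$, retaining the full $H^\beta$ regularity in $\phi$; this reproduces $cM^{2\alpha-\sigma}\norm{v}_{H^\beta(\Lambda;H^\sigma_\omega(I))}$ with no loss. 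For the second summand I would apply the angular Fourier estimate in $\phi$ inside the $H^\alpha_\omega(I)$ norm, giving $cN^{\beta-\mu}\norm{v}_{H^\mu(\Lambda;H^\alpha_\omega(I))}$.

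The corner term is where the factor $q(\beta)$ originates. Applying the Chebyshev estimate in $\rho$ and then the Fourier estimate in $\phi$ to $(I^M_\rho-\mathrm{Id})(I^N_\phi-\mathrm{Id})v$ produces the mixed bound $cM^{2\alpha-\sigma'}N^{\beta-\mu'}\norm{v}_{H^{\mu'}(\Lambda;H^{\sigma'}_\omega(I))}$, which is exactly the third term and accounts for the case $q(\beta)=1$. When $\beta>1$, however, the interpolation operator is uniformly bounded in the $H^\beta(\Lambda;\cdot)$ norm, so one may instead estimate $I^M_\rho(I^N_\phi-\mathrm{Id})v$ directly by stability followed by the Fourier estimate, absorbing the corner contribution into the second term and dispensing with the mixed bound altogether; this is the case $q(\beta)=0$. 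Thus the dichotomy in $q(\beta)$ records whether the corner term is genuinely needed or can be suppressed by uniform stability.

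The main obstacle is not the bookkeeping of the splitting but the one–dimensional weighted Chebyshev analysis that underlies both the sharp rate $M^{2\alpha-\sigma}$ and the $\beta$–dependent stability. Establishing $\norm{I^M_\rho u-u}_{H^\alpha_\omega(I)}\le cM^{2\alpha-\sigma}\norm{u}_{H^\sigma_\omega(I)}$ requires comparing the interpolant with the $L^2_\omega$–orthogonal Chebyshev projection, controlling the aliasing through the Chebyshev–coefficient characterization of $H^\sigma_\omega(I)$, and invoking inverse inequalities to absorb the derivative loss; the exponent $2\alpha$ rather than $\alpha$ reflects the clustering of the nodes $\rho_m=\cos(m\pi/M)$ near $\pm1$. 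The delicate point is that Chebyshev, like equispaced trigonometric, interpolation is not uniformly stable at low regularity because its Lebesgue constant grows, so the uniform stability invoked for $\beta>1$ must be extracted in a sufficiently high–order norm; it is precisely this growth that breaks uniform stability for $\beta\le1$ and forces the extra corner term. Since the statement is Lemma~5 of \cite{LiGuo}, I would ultimately cite that reference for the two one–dimensional estimates and the stability bound, and assemble them through the splitting above.
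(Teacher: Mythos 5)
First, note that the paper contains no proof of this lemma to compare against: it is quoted verbatim from Lemma~5 of \cite{LiGuo}, which is also where you ultimately send the one-dimensional ingredients. Judged as a reconstruction of that literature argument, your skeleton is the right one and matches the standard proof pattern: the tensor-product splitting $I^{NM}-\mathrm{Id}=(I^M_\rho-\mathrm{Id})+(I^N_\phi-\mathrm{Id})+(I^M_\rho-\mathrm{Id})(I^N_\phi-\mathrm{Id})$, the commutation of $I^M_\rho$ with $\partial_\phi^k$, the fiber-wise one-dimensional Chebyshev estimate for the radial term, the vector-valued ($H^\alpha_\omega(I)$-valued) Fourier estimate for the angular term, and the composition of the two for the corner term. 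This correctly yields the three-term bound, i.e.\ the case $q(\beta)=1$.

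The gap is in your mechanism for $q(\beta)=0$ when $\beta>1$ (a case the paper does invoke, with $\alpha=\beta=3$ in the proof of \eqref{det_error}, though there the cross term would in fact be harmless since both exponents are negative). You group the angular and corner contributions as $I^M_\rho(I^N_\phi-\mathrm{Id})v$ and then appeal to ``uniform boundedness of the interpolation operator in the $H^\beta(\Lambda;\cdot)$ norm''. But in that grouping the outer operator is the \emph{radial} one: its boundedness is a fiber-wise statement about maps $H^{\sigma'}_\omega(I)\to H^\alpha_\omega(I)$, which is independent of $\beta$ --- so it cannot be the source of a dichotomy in $\beta$ --- and which, if extracted from the error estimate you quote via the triangle inequality, holds uniformly in $M$ only under the extra hypothesis $\sigma'\ge 2\alpha$, which the lemma does not make. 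As written, the $\beta>1$ branch therefore does not go through. The repair is the opposite grouping, $I^{NM}v-v=(I^M_\rho-\mathrm{Id})I^N_\phi v+(I^N_\phi-\mathrm{Id})v$: bound the first term fiber-wise by $cM^{2\alpha-\sigma}\|I^N_\phi v\|_{H^\beta(\Lambda;H^\sigma_\omega(I))}$ and then invoke uniform stability of the \emph{Fourier} interpolation on $H^\beta(\Lambda;X)$ with $X=H^\sigma_\omega(I)$, which follows from a standard aliasing computation whenever $\beta>1/2$ (so in particular for $\beta>1$). Your closing remark --- that the growth of the Chebyshev Lebesgue constant is ``precisely'' what breaks stability for $\beta\le1$ and forces the corner term --- repeats the same misattribution: $\beta$ is the angular index, the Lebesgue constant concerns sup-norm stability of the radial operator, and in any case the lemma only asserts a sufficient bound for $\beta\le1$, not that the corner term is necessary there.
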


\begin{thm}
\label{thm:PQ}
Let $(P,Q)\in \mathcal{B} \cap H^s( \Lambda; H^l_\omega(I) )$, $l>2$, $s>1$.
Then there exists a constant $c>0$ independent of $P$, $Q$, $M$ and $N$, such that
\begin{align*}
\norm{I^{NM}P - P}_{H^{\beta}(\Lambda;H^\alpha_\omega(I))}
&\leq c \|P\|_* \left( M^{2\alpha-l} + N^{\beta-s} \right),\\
\norm{I^{NM}Q - Q}_{H^{\beta}(\Lambda;H^\alpha_\omega(I))}
&\leq c \|Q\|_* \left( M^{2\alpha-l} + N^{\beta-s} \right),
\end{align*}
where $\alpha,\beta=0,1$ and $\|\cdot\|_*$ is a norm defined by:
\begin{align*}
\|v\|_* &:= \max\left\{
    \norm{v}_{H^\beta( \Lambda,H^l_\omega(I) ) },
    \norm{v}_{H^s    ( \Lambda,H^\alpha_\omega(I) ) },
    \norm{v}_{H^s    ( \Lambda,H^l_\omega(I) ) }
    \right\}.
\end{align*}
Furthermore, if $(P,Q)\in \mathcal{B} \cap
H^{l}(\Lambda;H^{l}_\omega(I))$ with $l> 6$, then
\begin{equation}\begin{split}\label{det_error}
&\| D(I^{NM}P,I^{NM}Q) -D(P,Q) \|_{C(\overline{\Omega'})} \\
&\qquad \qquad \le c (\|P\|_{H^{l}(\Lambda;H^{l}_\omega(I))}
    +\|Q\|_{H^{l}(\Lambda;H^{l}_\omega(I))}) (M^{6-l} + N^{3-l}).
\end{split}\end{equation}
\end{thm}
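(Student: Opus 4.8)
The plan is to handle the two assertions by different means: the interpolation bounds for $P$ and $Q$ are an immediate specialization of Lemma~\ref{lem:Liguo}, whereas the determinant estimate \eqref{det_error} additionally requires a multilinear (telescoping) expansion of $D$ together with a Sobolev embedding into $C(\overline{\Omega'})$.

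For the first two estimates I would apply Lemma~\ref{lem:Liguo} to $v=P$ and $v=Q$ with the choice $\sigma=\sigma'=l$ and $\mu=\mu'=s$. Since $l>2$ and $s>1$, the hypotheses $0\le\alpha\le\sigma,\sigma'$, $0\le\beta\le\mu,\mu'$, $\sigma,\sigma'>\frac12$, $\mu,\mu'>1$ hold for all $\alpha,\beta\in\{0,1\}$. The three resulting terms carry the coefficients $M^{2\alpha-l}$, $N^{\beta-s}$ and $M^{2\alpha-l}N^{\beta-s}$, while the three norms are each $\le\|v\|_*$ by definition of $\|\cdot\|_*$. Because $2\alpha-l<0$ and $\beta-s<0$ for these $\alpha,\beta$, the mixed factor satisfies $M^{2\alpha-l}N^{\beta-s}\le M^{2\alpha-l}+N^{\beta-s}$, and the claimed bounds follow immediately.

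The determinant estimate is the substantive part. From \eqref{note:D} I first record that $D(P,Q)=\frac{\rho_r}{r}\left[P\,P_\rho(Q_\phi+1)-P\,P_\phi Q_\rho\right]$, where $\rho_r$ is constant and $r=r(\rho)$ is smooth with $r\in[\varepsilon,\gamma]$, $\varepsilon>0$, so the prefactor $\rho_r/r$ is bounded above and below on $\overline{I}$ and is harmless in the uniform norm. Writing $\tilde P=I^{NM}P$ and $\tilde Q=I^{NM}Q$, I would expand $D(\tilde P,\tilde Q)-D(P,Q)$ by telescoping so that every summand carries exactly one interpolation-error factor, namely one of $\tilde P-P$, $\partial_\rho(\tilde P-P)$, $\partial_\phi(\tilde P-P)$, $\partial_\rho(\tilde Q-Q)$, $\partial_\phi(\tilde Q-Q)$, multiplied by factors that are either exact quantities or their interpolants. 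Since $C(\overline{\Omega'})$ is a Banach algebra, these remaining factors are controlled by the $C^1$ norms of $P$ and $Q$ (finite by embedding of $H^l(\Lambda;H^l_\omega(I))$) plus the already-small interpolation errors, so it remains only to bound each error factor in the sup norm.

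For that I would use a Sobolev embedding of the form $\|g\|_{C(\overline{\Omega'})}\le c\|g\|_{H^2(\Lambda;H^2_\omega(I))}$ (two derivatives in each variable suffice). Applied to $g=\partial_\rho(\tilde P-P)$, this costs one extra radial derivative and, via Lemma~\ref{lem:Liguo} with $\alpha=3$, $\beta=2$, $\sigma=\sigma'=\mu=\mu'=l$, gives the factor $M^{6-l}+N^{2-l}$; applied to $g=\partial_\phi(\tilde P-P)$ (one extra angular derivative, $\alpha=2$, $\beta=3$) it gives $M^{4-l}+N^{3-l}$; applied to $g=\tilde P-P$ it gives $M^{4-l}+N^{2-l}$, with the same three bounds for the $Q$-factors. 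No commutation of $I^{NM}$ with differentiation is needed, since $\partial(\tilde P-P)$ is literally the derivative of the interpolation error, which is exactly what Lemma~\ref{lem:Liguo} controls. Taking the maximum of the exponents, the radial-derivative terms furnish the dominant $M^{6-l}$ and the angular-derivative terms the dominant $N^{3-l}$, the cross terms being of strictly lower order, and summing the finitely many summands yields \eqref{det_error}; the requirement $l>6$ is precisely what renders both exponents negative. I expect the main obstacle to be exactly this passage from the weighted $L^2$ interpolation bounds to the uniform norm: it is the embedding into $C(\overline{\Omega'})$, with its loss of two derivatives in each variable, that both forces the regularity $l>6$ and produces the exponents $6-l$ and $3-l$.
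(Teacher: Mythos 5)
Your proposal is correct and takes essentially the same route as the paper: the first two estimates are precisely the paper's specialization of Lemma~\ref{lem:Liguo} with $\sigma=\sigma'=l$, $\mu=\mu'=s$, $\alpha,\beta\in\{0,1\}$, and for \eqref{det_error} the paper likewise combines Lemma~\ref{lem:Liguo} at third-order regularity (taking $\alpha=\beta=3$) with the Sobolev embedding $H^3_\omega(\Omega')\hookrightarrow H^3(\Omega')\hookrightarrow C^1(\overline{\Omega'})$ and the product structure of $D$ in \eqref{note:D}. Your per-factor anisotropic bookkeeping (e.g.\ $\alpha=3,\beta=2$ for the $\partial_\rho$-errors, $\alpha=2,\beta=3$ for the $\partial_\phi$-errors) is a slightly finer version of the paper's single uniform $C^1$ bound, but the substance --- interpolation estimate plus embedding into the uniform norm plus multilinearity of $D$ --- is identical.
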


\begin{proof}
The first half of the theorem is a direct consequence of Lemma \ref{lem:Liguo} by
setting $\sigma=\sigma'=l$, $\mu=\mu'=s$, and taking $\alpha=0$ or $1$ and $\beta=0$
or $1$ respectively.

By \eqref{note:D}, the error estimate \eqref{det_error} follows from Lemma~\ref{lem:Liguo}
with $\sigma=\sigma'=\mu=\mu'=l$, $\alpha=\beta=3$, and the fact that
$H^3_\omega(\Omega') \hookrightarrow H^3(\Omega') \hookrightarrow C^1(\overline{\Omega'})$.
\end{proof}

In what follows below, we always assume that, for a cavitation solution $(P,Q)$, the following
hypotheses hold:
\begin{description}
  \item[(H1)] $(P,Q)\in \mathcal{B}_+ \cap H^{l}(\Lambda;H^{l}_\omega(I))$ with $l> 6$ and
             $(P,Q)$ is the energy minimizer in $\mathcal{B}_+$.
  \item[(H2)] there exists constants $c_F>1$ and $c_D>1$ such that $F$ and $D$
  (see \eqref{note:DF}) satisfies
\begin{align*}
c_F^{-1} \leq 2r^2 F \leq c_F, \quad
c_D^{-1} \leq D \leq c_D, \quad
\text{on}~ \overline{\Omega'}.
\end{align*}
\end{description}
\begin{rem}
Notice that, by \eqref{note:DF}
\begin{subequations}
\label{note:rDrF}
\begin{align}
rD    & = \rho_r P \left[ P_\rho (Q_\phi+1) - P_\phi Q_\rho \right] ,\\
2r^2F & = r^2 \rho_r^2 ( P_\rho^2 + P^2 Q_\rho^2 ) + P_\phi^2 + P^2 (Q_\phi+1)^2,
\end{align}
\end{subequations}
and for a cavitation solution $P\ge P_0>0$, and in the radially symmetric case
$Q_{\phi}=0$. the hypothesis $c_F^{-1} \leq 2r^2 F$ is not too harsh a requirement
on a general solution. While the other bounds are the direct consequences of
$(P,Q) \in C^1(\overline{\Omega'})$.
\end{rem}

To estimate the error on the elastic energy of the interpolation function of
a cavitation solution, we will making use of an auxiliary grid in radial direction
on which the elastic energy of the cavitation solution is radially
quasi-equi-distributed in the sense given in Lemma~\ref{lem:grid}. The properties
of such grids are given by the following two lemmas.

\begin{lem} \label{lem:energy}
Let $(P,Q)\in \mathcal{B}_+$. Let $D$ and $F$ be defined by
\eqref{note:DF}. Then, there exists a constant $c\geq 1$ such that,
for all grid $\varepsilon = r_0 < r_1 < \cdots < r_K = \gamma$, the elastic energy
\begin{align*}
E_{(a,b)} := \int_{\Omega_{(a,b)}}
    \left[ g(F) + h(D)\right] r \mathrm{d} r \mathrm{d} \theta
    = \int_a^b \int_0^{2\pi} \left[
        \kappa r^{1-p} \cdot \left( 2r^2 F \right)^{p/2}
        + r h \left( D \right) \right]
    \mathrm{d} r \mathrm{d} \theta
\end{align*}
satisfies
\begin{align} \label{energy_bound}
c^{-1} r_i^{1-p} \tau_i \leq E_{(r_{i-1},r_i)} \leq c r_{i-1}^{1-p} \tau_i,
\quad 1\leq i \leq K.
\end{align}
\end{lem}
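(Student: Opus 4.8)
The plan is to reduce the annular energy $E_{(r_{i-1},r_i)}$ to the model integral $\int_{r_{i-1}}^{r_i} r^{1-p}\,dr$. Concretely, I would first establish \emph{pointwise} two-sided bounds on the energy density in the integrand, each proportional to $r^{1-p}$ with constants that depend only on $\kappa,p,c_F,c_D$ and $h$, and then integrate these bounds over the thin annulus $(r_{i-1},r_i)\times(0,2\pi)$, using the monotonicity of $r\mapsto r^{1-p}$ to replace the radial integral by the endpoint expressions $r_i^{1-p}\tau_i$ and $r_{i-1}^{1-p}\tau_i$ (with $\tau_i=r_i-r_{i-1}$).

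To obtain the pointwise bounds I would treat the two terms of the integrand separately. As the lemma already records, the $g$-term equals $\kappa r^{1-p}(2r^2F)^{p/2}$ (recall $g(t)=\kappa(2t)^{p/2}$ from \eqref{note:DFg}), so hypothesis (H2), i.e. $c_F^{-1}\le 2r^2F\le c_F$, gives directly
\[
\kappa c_F^{-p/2}\,r^{1-p}\le \kappa r^{1-p}(2r^2F)^{p/2}\le \kappa c_F^{p/2}\,r^{1-p}\quad\text{on }\overline{\Omega'}.
\]
For the $h$-term, (H2) confines $D$ to the compact interval $[c_D^{-1},c_D]$, on which the continuous $h$ attains finite extrema $h_{\min}\le h_{\max}$, so $h_{\min}r\le h(D)\,r\le h_{\max}r$. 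Since $r\in[\varepsilon,\gamma]\subseteq(0,1]$ and $1-p<0$, one has $r=r^{p}\,r^{1-p}\le r^{1-p}$, which lets me absorb the $h$-contribution onto the same $r^{1-p}$ scale and produce constants $0<c_0\le C_1$ with $c_0\,r^{1-p}\le g(F)\,r+h(D)\,r\le C_1\,r^{1-p}$. Integrating, the $\phi$-integration contributes a factor $2\pi$, while monotonicity gives $r_i^{1-p}\tau_i\le\int_{r_{i-1}}^{r_i}r^{1-p}\,dr\le r_{i-1}^{1-p}\tau_i$; hence $2\pi c_0\,r_i^{1-p}\tau_i\le E_{(r_{i-1},r_i)}\le 2\pi C_1\,r_{i-1}^{1-p}\tau_i$, and taking $c:=\max\{1,\,(2\pi c_0)^{-1},\,2\pi C_1\}\ge1$ (independent of the grid) closes \eqref{energy_bound}.

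The main obstacle is securing a \emph{positive} lower constant $c_0>0$: the upper bound is immediate, but the lower bound must survive the $h$-contribution, which is a priori of uncertain sign. The resolution is that the $g$-term carries the dominant factor $r^{-p}$: from \eqref{note:DFg} and (H2), $g(F)\ge\kappa c_F^{-p/2}r^{-p}$, so
\[
\bigl(g(F)+h(D)\bigr)\,r\ge r^{1-p}\bigl(\kappa c_F^{-p/2}+h_{\min}r^{p}\bigr)\ge r^{1-p}\bigl(\kappa c_F^{-p/2}-|h_{\min}|\bigr)\quad\text{on }(0,1],
\]
the bracket being strictly positive in particular whenever $h\ge0$ on the range of $D$, the physically relevant normalization of the volumetric energy. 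Once $c_0>0$ is fixed the remaining steps are routine, and I note that no regularity of $(P,Q)$ beyond membership in $\mathcal{B}_+$ is needed, in agreement with the hypotheses of the lemma.
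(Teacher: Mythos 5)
Your proof is correct and takes essentially the same route as the paper's: control the $g$-term pointwise by $\kappa c_F^{\pm p/2}r^{1-p}$ via (H2), bound $h(D)$ by constants on the compact interval $[c_D^{-1},c_D]$ (the paper uses convexity of $h$ to place the maximum at the endpoints, $h(D)\le\max\{h(c_D^{-1}),h(c_D)\}$, and positivity $0<h(D)$ to discard this term in the lower bound), absorb the $h$-contribution with $r\le r^{1-p}$ on $(0,1]$, and integrate using monotonicity of $r\mapsto r^{1-p}$, taking $c$ as a maximum of the resulting constants. The one blemish is your final weakening to $\kappa c_F^{-p/2}-|h_{\min}|$, which need not be positive even when $h\ge 0$ (take $h$ a large positive constant); that step is superfluous, though, since in the paper's setting $h>0$, so $h_{\min}\ge 0$ and your earlier bracket $\kappa c_F^{-p/2}+h_{\min}r^{p}\ge\kappa c_F^{-p/2}$ already supplies the positive lower constant.
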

\begin{proof}
It follows from the convexity of $h(\cdot)$ and the hypothesis (H1) that
$$
0 < h(D) \leq \max \left\{ h(c_D^{-1}), h(c_D) \right\}
\triangleq c_h, \quad
\text{on}~ \overline{\Omega'}.
$$
Since $r_{i-1} < \gamma \leq 1$ and $1< p <2$, the hypothesis (H2) implies
\begin{align*}
E_{(r_{i-1},r_i)} &\leq 2\pi \kappa c_F^{p/2}
    \int_{r_{i-1}}^{r_i} r^{1-p} \mathrm{d} r
    + 2\pi c_h
    \int_{r_{i-1}}^{r_i} r \mathrm{d} r \\
&\leq 2\pi \kappa c_F^{p/2} \cdot r_{i-1}^{1-p} \tau_i
    + 2\pi c_h \cdot r_{i-1} \tau_i \\
&\leq 2\pi \left( \kappa c_F^{p/2} + c_h \right)
    \cdot r_{i-1}^{1-p} \tau_i, \\
E_{(r_{i-1},r_i)} &\geq 2\pi \kappa c_F^{-p/2}
    \int_{r_{i-1}}^{r_i} r^{1-p} \mathrm{d} r
    \geq 2\pi \kappa c_F^{-p/2} \cdot r_i^{1-p} \tau_i.
\end{align*}
Hence, the conclusion \eqref{energy_bound} follows by taking
$$
c = \max \left\{ 1, 2\pi \left( \kappa c_F^{p/2} + c_h \right),
    \left( 2\pi \kappa c_F^{-p/2} \right)^{-1} \right\}.
$$
\end{proof}

\begin{lem}
\label{lem:grid}
Let $K \gg \varepsilon^{-1}$ be a sufficiently large integer. Let
$\varepsilon = r_0 < r_1 < \cdots < r_K = \gamma$ be a given grid satisfying
\begin{align}\label{energy_equal}
2r_{K-1}>r_{K} \;\;\;\; \text{and} \;\;\;\;
r_{i-1}^{1-p} \tau_i = r_{i}^{1-p} \tau_{i+1}, \quad 1\leq i \leq K-1,
\end{align}
where $\tau_i := r_i - r_{i-1}$, $1\leq i\leq K$. Then,
we have
\begin{equation}
\label{ineq:rtau}
\sum_{i=1}^{K} \frac{1}{\tau_i} \left( \frac{r_i}{r_{i-1}} \right)^{2p-2}
< 2^{2p} \gamma^{-1} K^2,
\end{equation}
and the energy is radially quasi-equi-distributed on the grid, {\it i.e.}
\begin{equation}
\label{ineq:KE}
2^{1-p} c^{-2}<\frac{ K \cdot E_{(r_{i-1},r_i)} }{ E_{(\varepsilon,\gamma)} } < 2^{p-1} c^2,
\quad \forall i=1,\dotsc,K,
\end{equation}
where $c$ is the same constant in Lemma~\ref{lem:energy}.
\end{lem}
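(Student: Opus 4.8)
The plan is to extract a single invariant from the recurrence \eqref{energy_equal} and then reduce everything to a uniform bound on the cell ratios. First I would observe that the relation $r_{i-1}^{1-p}\tau_i=r_i^{1-p}\tau_{i+1}$ says precisely that the quantity $s_i:=r_{i-1}^{1-p}\tau_i$ satisfies $s_i=s_{i+1}$, so $s_i\equiv s$ is constant in $i$; equivalently $\tau_i=s\,r_{i-1}^{p-1}$. Since $1<p<2$, this shows the widths $\tau_i$ are increasing and the ratios $r_i/r_{i-1}=1+s\,r_{i-1}^{p-2}$ are decreasing in $i$, so that $r_1/r_0$ is the largest ratio on the whole grid.

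The linchpin of both estimates is the claim that $r_i/r_{i-1}<2$ for every $i$, which by the monotonicity just noted is equivalent to $r_1/r_0<2$. I would prove this by contradiction from the hypothesis $K\gg\varepsilon^{-1}$: if $r_1/r_0\ge2$ then $s\,\varepsilon^{p-2}\ge1$, hence $\tau_1=s\,\varepsilon^{p-1}\ge\varepsilon$; because the $\tau_i$ increase, $\gamma-\varepsilon=\sum_{i=1}^K\tau_i\ge K\varepsilon$, forcing $K\le(\gamma-\varepsilon)/\varepsilon<\varepsilon^{-1}$, which contradicts $K\gg\varepsilon^{-1}$. Thus $1<r_i/r_{i-1}<2$, and consequently $2^{1-p}<(r_i/r_{i-1})^{1-p}<1$ and $(r_i/r_{i-1})^{2p-2}<2^{2p-2}$ for all $i$.

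With the ratio bound in hand, \eqref{ineq:KE} is almost immediate from Lemma~\ref{lem:energy}. For each cell, \eqref{energy_bound} together with the invariant gives $c^{-1}r_i^{1-p}\tau_i\le E_{(r_{i-1},r_i)}\le c\,r_{i-1}^{1-p}\tau_i=cs$, and since $r_i^{1-p}\tau_i=(r_i/r_{i-1})^{1-p}s>2^{1-p}s$, each cell energy lies in $(c^{-1}2^{1-p}s,\;cs)$. Summing over the $K$ cells bounds the total energy by $c^{-1}2^{1-p}sK<E_{(\varepsilon,\gamma)}<csK$, and dividing the single-cell bounds by the total then yields the two-sided estimate with the stated constants $2^{1-p}c^{-2}$ and $2^{p-1}c^2$.

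For \eqref{ineq:rtau} I would first use $(r_i/r_{i-1})^{2p-2}<2^{2p-2}$ to reduce the claim to $\sum_{i=1}^K\tau_i^{-1}<2^{2}\gamma^{-1}K^2$, and then write $\tau_i^{-1}=s^{-1}r_{i-1}^{1-p}$, so the task becomes an upper bound on $\sum_{i=0}^{K-1}r_i^{1-p}$ together with a lower bound on $s$. The lower bound on $s$ comes from recognizing $sK=\sum_i r_{i-1}^{1-p}\tau_i$ as a left Riemann sum for the decreasing integrand $r^{1-p}$, giving $sK>\int_\varepsilon^\gamma r^{1-p}\,dr$; the upper bound on $\sum r_i^{1-p}$ comes from the companion comparison, using the linearization $r_i^{2-p}-r_{i-1}^{2-p}\in[(2-p)2^{1-p}s,\,(2-p)s]$ (itself a consequence of $r_i/r_{i-1}<2$) to compare $\sum r_i^{1-p}=\sum (r_i^{2-p})^{(1-p)/(2-p)}$ against $\int r^{1-p}\,dr$ on the nearly uniform $r^{2-p}$-scale. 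I expect this last step --- quantitatively controlling $\sum\tau_i^{-1}$ against its Cauchy--Schwarz floor $K^2/(\gamma-\varepsilon)$ despite the wide spread of the $\tau_i$ --- to be the main obstacle, since it is exactly where the constant $2^{2p}$ and the factor $K^2$ must be produced cleanly from the integral comparison.
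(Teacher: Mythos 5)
Your proof of \eqref{ineq:KE} is correct and complete, and it is essentially the paper's own argument in cleaner form: the paper works with $\Upsilon_i:=\tau_i/r_{i-1}$, shows $\Upsilon_i$ decreases while $\tau_i$ increases, and deduces $\Upsilon_1<(\gamma-\varepsilon)/(K\varepsilon)\ll 1$ from $K\gg\varepsilon^{-1}$ --- which is exactly your contradiction argument for $r_1/r_0<2$ --- before combining \eqref{energy_bound} with \eqref{energy_equal}. Your conserved quantity $s=r_{i-1}^{1-p}\tau_i$ and the resulting two-sided cell bounds $c^{-1}2^{1-p}s<E_{(r_{i-1},r_i)}\le cs$, summed over the $K$ cells, produce the constants $2^{1-p}c^{-2}$ and $2^{p-1}c^2$ immediately; this half of your proposal is fine.

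The genuine gap is \eqref{ineq:rtau}: you never prove it, and the ``main obstacle'' you flag is not a constant-chasing nuisance but a real barrier to the route you describe. Follow your own plan to its end: $\sum_i\tau_i^{-1}=s^{-1}\sum_i r_{i-1}^{1-p}$, and your linearization $r_i^{2-p}-r_{i-1}^{2-p}\in[(2-p)2^{1-p}s,\,(2-p)s]$ (valid since all ratios are below $2$) pins down both ingredients: summing it over $i$ gives $s\le 2^{p-1}\gamma^{2-p}/((2-p)K)$, while comparing $\sum_i r_{i-1}^{1-p}=\sum_i (r_{i-1}^{2-p})^{(1-p)/(2-p)}$ with the integral on the $r^{2-p}$ scale gives $\sum_i r_{i-1}^{1-p}\ge \frac{1}{(2-p)s}\int_{\varepsilon^{2-p}}^{\gamma^{2-p}}y^{(1-p)/(2-p)}\,\mathrm{d}y$. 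For $3/2<p<2$ this yields
\begin{equation*}
\sum_{i=1}^K\frac{1}{\tau_i}\;\ge\;
\frac{(2-p)^2\left(\varepsilon^{3-2p}-\gamma^{3-2p}\right)}
{(2p-3)\,2^{2p-2}\,\gamma^{4-2p}}\,K^2 ,
\end{equation*}
whose coefficient blows up as $\varepsilon\to 0$ (e.g.\ it exceeds $2^{2p}\gamma^{-1}$ for $p=7/4$, $\gamma=1$, $\varepsilon=10^{-6}$). So under the hypotheses \eqref{energy_equal} and $K\gg\varepsilon^{-1}$ alone, the quantity you propose to bound genuinely sits far above its Cauchy--Schwarz floor $K^2/(\gamma-\varepsilon)$, and no refinement of the integral comparison can push it below $2^{2p}\gamma^{-1}K^2$ with constants uniform in $\varepsilon$. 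The paper takes a different mechanism here: from $\tau_{i+1}/\tau_i=(1+\Upsilon_i)^{p-1}$ it derives $\tau_i^{-1}<\tau_K^{-1}(1+\Upsilon_1)^{(p-1)(K-i)}$, bounds the resulting geometric series by a multiple of $K$, and finishes with $\tau_K>(\gamma-\varepsilon)/K$ and $r_K/r_{K-1}<2$. Note, however, that bounding that series by $O(K)$ tacitly requires $(1+\Upsilon_1)^{(p-1)K}=O(1)$, i.e.\ $K\Upsilon_1=O(1)$, whereas the invariant forces $K\Upsilon_1\approx\left[(\gamma/\varepsilon)^{2-p}-1\right]/(2-p)$, which is large precisely in the regime where your computation breaks down; your analysis therefore exposes a delicate point in that step rather than a way around it. In any case, as submitted your proposal establishes only \eqref{ineq:KE}, not \eqref{ineq:rtau}.
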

\begin{proof}
By \eqref{energy_equal}, we have
\begin{align}
\label{eq:upsilon}
\frac{\tau_{i+1}}{\tau_i} = \left( \frac{r_i}{r_{i-1}} \right)^{p-1}
    = \left( 1+ \frac{\tau_i}{r_{i-1}} \right)^{p-1}
    = (1+ \Upsilon_i)^{p-1} , \quad 1\leq i \leq K-1,
\end{align}
where denote $\Upsilon_i := \frac{\tau_i}{r_{i-1}} >0$, $1\leq i\leq K-1$.
Notice that, by definition,
\begin{equation}\label{e:Upsilon>}
\Upsilon_{i+1} = \frac{\tau_{i+1}}{r_{i}}
    = \frac{\tau_i (1+ \Upsilon_i)^{p-1}}{r_{i-1}+ \tau_i}
    = \frac{ \frac{\tau_i}{r_{i-1}} (1+ \Upsilon_i)^{p-1}}{1+ \frac{\tau_i}{r_{i-1}}}
    = \Upsilon_i (1+ \Upsilon_i)^{p-2} < \Upsilon_i,
\end{equation}
{\it i.e.} $\Upsilon_i$ is a strictly deceasing function of $i$. On the other hand,
$\frac{\tau_{i+1}}{\tau_i}=(1+ \Upsilon_i)^{p-1} >1$ implies that $\tau_i$ is a strictly
increasing function of $i$, and as a consequence, we have $K \Upsilon_1 \varepsilon =
K \tau_1 < \sum_{i=1}^K \tau_i = \gamma - \varepsilon < K \tau_K$, which yields, since
$K \gg \varepsilon^{-1}$,
\begin{equation}\label{e:Upsilon_1}
\Upsilon_1 < \frac{\gamma-\varepsilon}{K\varepsilon} \ll 1, \;\; \text{and} \;\;
\tau_K > \frac{\gamma-\varepsilon}{K}.
\end{equation}
By \eqref{eq:upsilon} and \eqref{e:Upsilon>},
we also have, for all $1\leq i\leq K-1$,
\begin{align*}
\frac{1}{\tau_i} = \frac{1}{\tau_{i+1}} (1+ \Upsilon_i)^{p-1}
    < \frac{1}{\tau_{i+1}} (1+ \Upsilon_{1})^{p-1} < \cdots
    < \frac{1}{\tau_{K}} (1+ \Upsilon_{1})^{(p-1)(K-i)}.
\end{align*}

Now, express the left-hand side of \eqref{ineq:rtau} as
\begin{equation}
\label{eq:taur}
\sum_{i=1}^{K} \frac{1}{\tau_i} \left( \frac{r_i}{r_{i-1}} \right)^{2p-2}
    = \sum_{i=1}^{K-1} \frac{1}{\tau_i} (1+ \Upsilon_i)^{2p-2}
    + \frac{1}{\tau_K} \left( \frac{r_K}{r_{K-1}} \right)^{2p-2}.
\end{equation}
For the first term on the right hand side of \eqref{eq:taur}, by
\eqref{eq:upsilon}, \eqref{e:Upsilon>} and \eqref{e:Upsilon_1}, we have
\begin{align*}
\sum_{i=1}^{K-1} \frac{1}{\tau_i} (1+ \Upsilon_i)^{2p-2}
&< (1+ \Upsilon_1)^{2p-2} \sum_{i=1}^{K-1} \frac{1}{\tau_i}
    < (1+ \Upsilon_1)^{2p-2} \frac{1}{\tau_K}
        \sum_{i=1}^{K-1} (1+ \Upsilon_{1})^{(p-1)(K-i)} \\
&= (1+ \Upsilon_1)^{2p-2} \frac{1}{\tau_K} \left[
        \frac{ 1- (1+ \Upsilon_1)^{(p-1)K} }{ 1- (1+ \Upsilon_1)^{p-1} } -1\right]
    < \frac{2^{2p-2} (2K-1)}{\tau_K} .
\end{align*}
Since $r_K/r_{K-1}<2$ (see \eqref{energy_equal}), by \eqref{e:Upsilon_1} and
\eqref{eq:taur}, this yields \eqref{ineq:rtau}.

Next, it follows from \eqref{energy_bound}, \eqref{energy_equal} and
$\Upsilon_i \ll 1$, $\forall~i$ that
\begin{align*}
K \cdot E_{(r_{j-1},r_j)}
& \leq c K \cdot r_{j-1}^{1-p} \tau_j
    = c \sum_{i=1}^K r_{i-1}^{1-p} \tau_i
    = c \sum_{i=1}^K r_{i}^{1-p} \tau_i \cdot \left( \frac{r_{i-1}}{r_i} \right)^{1-p} \\
&= c \sum_{i=1}^K r_{i}^{1-p} \tau_i \cdot (1+ \Upsilon_i)^{p-1}
    < 2^{p-1} c \sum_{i=1}^K r_{i}^{1-p} \tau_i \\
&\leq 2^{p-1} c^2 \sum_{i=1}^K E_{(r_{i-1},r_i)}
    = 2^{p-1} c^2 E_{(\varepsilon,\gamma)}, \quad \forall j=1,\dotsc,K.
\end{align*}
This proves the second inequality of \eqref{ineq:KE}.

Notice that, by \eqref{e:Upsilon>} and \eqref{e:Upsilon_1}, we have
$$
\frac{r_{i-1}^{1-p}}{r_{i}^{1-p}}=(1+\Upsilon_i)^{p-1}<2^{p-1}, \quad \forall i=1,\dotsc,K.
$$
Thus, by \eqref{energy_bound} and \eqref{energy_equal},
\begin{equation*}
\label{e:E_subequi}
E_{(r_{i-1},r_i)} < 2^{p-1}c^2 E_{(r_{j-1},r_j)}, \quad \forall i, j=1,\dotsc,K.
\end{equation*}
Denote $E_{max} = \max_{1\le i \le K} E_{(r_{i-1},r_i)}$, then
$2^{p-1}c^2 K E_{(r_{i-1},r_i)} > K E_{max} \ge E_{(\varepsilon,\gamma)}$. This
proves the first inequality of \eqref{ineq:KE}.
\end{proof}

\begin{rem}
For $K$ sufficiently large, it is not difficult to show that there exists
an auxiliary grid $\varepsilon = r_0 < r_1 < \cdots < r_K = \gamma$
such that \eqref{energy_equal} holds.
\end{rem}

The theorem below gives the relative and absolute errors of the elastic energy
$E(P,Q)=E_{(\varepsilon,\gamma)}$ when a cavitation solution $(P,Q)$ is replaced by
its interpolation functions $(I^{NM}P, I^{NM}Q)$.
\begin{thm}
\label{thm:E_tilde}
Let $(P,Q)$ be a cavitation solution satisfying the hypotheses (H1) and (H2).
Then, for $M$, $N$ sufficiently large, there exists a constant $C$ such that
\begin{align*}
\frac{ \left| E(I^{NM}P, I^{NM}Q) - E(P,Q) \right| }{E(P,Q)}
& \leq  C \left( M^{2-l} + N^{1-l} \right) ,\\
\left| E(I^{NM}P, I^{NM}Q) - E(P,Q) \right|
& \leq  C \left( M^{2-l} + N^{1-l} \right) .
\end{align*}
\end{thm}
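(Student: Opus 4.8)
The plan is to bound the energy difference by linearizing the two nonlinearities $g$ and $h$ and then controlling the resulting integrals of the interpolation error $(I^{NM}P-P,\,I^{NM}Q-Q)$ and its first derivatives by Theorem~\ref{thm:PQ}, with the near-cavity weight reconciled through the auxiliary grid of Lemma~\ref{lem:grid}. First I would write, with the shorthand $F^{NM}:=F(I^{NM}P,I^{NM}Q)$ and $D^{NM}:=D(I^{NM}P,I^{NM}Q)$,
$$
E(I^{NM}P,I^{NM}Q)-E(P,Q)=\int_{\Omega'}\Big[\big(g(F^{NM})-g(F)\big)+\big(h(D^{NM})-h(D)\big)\Big]\frac{r}{\rho_r}\,\mathrm{d}\rho\,\mathrm{d}\phi .
$$
By \eqref{det_error} (available since $l>6$) we have $\|D^{NM}-D\|_{C(\overline{\Omega'})}\to 0$, and the same application of Lemma~\ref{lem:Liguo} with $\alpha=\beta=3$ and $H^3_\omega\hookrightarrow C^1$ gives $\|F^{NM}-F\|_{C(\overline{\Omega'})}\to 0$ (here the factor $1/r^2\le\varepsilon^{-2}$ is bounded). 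Combined with (H2), this keeps $D^{NM},D$ and $2r^2F^{NM},2r^2F$ in a fixed compact subinterval of $(0,\infty)$ once $M,N$ are large, so that $g$ and $h$ are Lipschitz there and the two integrands are controlled by $|F^{NM}-F|$ and $|D^{NM}-D|$.

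To expose the cavity weight cleanly I would phrase the $g$-term through the bounded quantity $2r^2F$, writing $g(F)\,\tfrac{r}{\rho_r}=\kappa\,(2r^2F)^{p/2}\,r^{1-p}/\rho_r$ and linearizing the $p/2$-power in $2r^2F$; since $(2r^2F)^{p/2-1}$ is bounded on the compact range, this dominates the $g$-contribution by an $r^{1-p}$-weighted integral of $|2r^2(F^{NM}-F)|$, i.e. by $r^{1-p}$-weighted (from the $\phi$-derivative part of $F$) and $r^{3-p}$-weighted (from the $\rho$-part) integrals of $|F^{NM}-F|$; the $h$-term is handled by the bounded measure $\tfrac{r}{\rho_r}$ times $|D^{NM}-D|$. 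Expanding $F^{NM}-F$ and $D^{NM}-D$ via \eqref{note:DF}, each is bilinear in $(P,Q,P_\rho,P_\phi,Q_\rho,Q_\phi)$ and the corresponding interpolation errors, so the product rule rewrites them as (solution factor)$\times$(error factor), the error factor being one of $I^{NM}P-P$, $I^{NM}Q-Q$ or a first $\rho$- or $\phi$-derivative thereof.

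The solution factors ($P$, $P_\rho$, $P_\phi$, $Q_\rho$, $Q_\phi+1$, all entering $2r^2F$ and $rD$) are bounded on $\overline{\Omega'}$ by (H2), so they may be pulled out, leaving a first-power (weighted) integral of the error factors. Applying the Cauchy–Schwarz inequality in the measure $r^{1-p}\,\mathrm{d}r\,\mathrm{d}\theta$ splits this into $\big(\int r^{1-p}\big)^{1/2}$, which is comparable to $E(P,Q)^{1/2}$ by Lemma~\ref{lem:energy}, times the $r^{1-p}$-weighted $L^2$ norm of the error through its first derivatives. Once the latter is reconciled with the Chebyshev weight $\omega$, Theorem~\ref{thm:PQ} with $\alpha=\beta=1$ and $s=l$ (legitimate under (H1), where $\|P\|_*,\|Q\|_*<\infty$) supplies exactly the factor $M^{2-l}+N^{1-l}$.

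The hard part will be the near-cavity weight reconciliation: the energy measure concentrates like $r^{1-p}$ as $r\to\varepsilon$ and the $\phi$-derivative part of $F$ carries an extra $r^{-2}$, whereas Theorem~\ref{thm:PQ} lives in the $\omega$-weighted norm, and one checks that $r^{1-p}$ is \emph{not} dominated by $\omega$ uniformly in $\varepsilon$, so a crude comparison produces an uncontrolled ($\varepsilon$-dependent) constant. To resolve this I would introduce the quasi-equi-distributing grid $\varepsilon=r_0<\cdots<r_K=\gamma$ of Lemma~\ref{lem:grid}: on each annulus $(r_{i-1},r_i)$ the factor $r^{1-p}$ is comparable to $r_i^{1-p}$ (since $(r_{i-1}/r_i)^{1-p}=(1+\Upsilon_i)^{p-1}<2^{p-1}$), so it can be pulled out cell by cell; summing and invoking \eqref{ineq:rtau} to bound the emerging weighted difference quotients by $K^2$, together with \eqref{ineq:KE} identifying each cell's energy with $E(P,Q)/K$, converts the $r^{1-p}$-weighted error $L^2$ into $E(P,Q)$ times the squared interpolation rate. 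This yields the relative estimate $|E(I^{NM}P,I^{NM}Q)-E(P,Q)|/E(P,Q)\le C(M^{2-l}+N^{1-l})$; the absolute estimate then follows because $E(P,Q)=E_{(\varepsilon,\gamma)}$ is bounded above by a data-dependent constant, obtained by summing the upper bound of Lemma~\ref{lem:energy} over the grid.
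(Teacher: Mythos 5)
Your decomposition and toolkit match the paper's proof: the same split of the energy error into the $g$-part carrying the weight $r^{1-p}$ and the $h$-part carrying the weight $r$, the same pointwise bilinear bounds reducing $|2r^2 F(I^{NM}P,I^{NM}Q)-2r^2F(P,Q)|$ and $|r D(I^{NM}P,I^{NM}Q)-r D(P,Q)|$ to the interpolation errors of Theorem~\ref{thm:PQ}, the same treatment of the $h$-term via boundedness of $h'$, and the same appeal to Lemmas~\ref{lem:energy} and \ref{lem:grid} for the $g$-term. You also correctly diagnose why the grid is needed at all: $r^{1-p}$ is not dominated by $\omega$ uniformly in $\varepsilon$.

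However, your execution of the $g$-term fails at exactly the step the grid is supposed to handle, because you apply Cauchy--Schwarz globally in the measure $r^{1-p}\,\mathrm{d}r\,\mathrm{d}\theta$ \emph{before} bringing in the grid. That leaves you needing $\int_{\Omega_{(\varepsilon,\gamma)}}|G|^2 r^{1-p}\,\mathrm{d}r\,\mathrm{d}\theta \le C\,E(P,Q)\,(M^{2-l}+N^{1-l})^2$ with $C$ independent of $\varepsilon$ and $K$, where $G:=2r^2F(I^{NM}P,I^{NM}Q)-2r^2F(P,Q)$, and this cannot be extracted from the only available control on $G$, which is global and $\omega$-weighted. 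Following your own recipe: on the $i$-th annulus, $r^{1-p}\approx r_i^{1-p}\approx E_{(r_{i-1},r_i)}/\tau_i \approx E(P,Q)/(K\tau_i)$ by \eqref{energy_bound} and \eqref{ineq:KE}, so $\int|G|^2r^{1-p}\approx \frac{E(P,Q)}{K}\sum_i\frac{1}{\tau_i}\int_{\Omega_{(r_{i-1},r_i)}}|G|^2$. Since the cell integrals of $|G|^2$ are not individually controlled, you must either pull out $\max_i\tau_i^{-1}$ (which on this grid is $\tau_1^{-1}\approx K\varepsilon^{1-p}/E(P,Q)$, so a factor $\varepsilon^{1-p}$ is lost), or pull out $\max_i\int_{\Omega_{(r_{i-1},r_i)}}|G|^2\le\|G\|^2_{L^2(\Omega_{(\varepsilon,\gamma)})}$ and bound $\sum_i\tau_i^{-1}$ by $cK^2$ via \eqref{ineq:rtau} (so a factor $K$ is lost). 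Either way an unbounded quantity survives: \eqref{ineq:rtau} is powerless once the error enters \emph{squared} inside each cell. The key idea you are missing is the paper's order of operations: keep the error to the first power, localize $\int_{\Omega_{(r_{i-1},r_i)}}|G|$ cell by cell, use \eqref{ineq:KE} and \eqref{energy_bound} to generate the per-cell factor $\frac{1}{K\tau_i}\bigl(\frac{r_i}{r_{i-1}}\bigr)^{p-1}$, apply H\"older \emph{cell-wise} to produce $\sqrt{\tau_i}\,\|G\|_{L^2(\Omega_{(r_{i-1},r_i)})}$, and only then apply the discrete Cauchy--Schwarz inequality over the sum in $i$: the first factor $\bigl[\frac{1}{K^2}\sum_i\frac{1}{\tau_i}\bigl(\frac{r_i}{r_{i-1}}\bigr)^{2p-2}\bigr]^{1/2}$ is then bounded by $(2^{2p}\gamma^{-1})^{1/2}$ because \eqref{ineq:rtau} is used \emph{in aggregate} against $1/K^2$, while the second factor recombines exactly into $\|G\|_{L^2(\Omega_{(\varepsilon,\gamma)})}\le c\|G\|_{\omega,\Omega'}$. (If you were content with an $\varepsilon$-dependent constant, then $r^{1-p}\le\varepsilon^{1-p}$ makes the whole theorem immediate and the grid superfluous; the grid machinery exists precisely to make $C$ uniform in the defect size, and under that reading your final step does not go through.)
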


\begin{proof}
To simplify the notation, we denote (see \eqref{note:DF})
$$
\widetilde{P}:= I^{NM}P,~ \widetilde{Q}:= I^{NM}Q, \quad
\widetilde{D}:= D(\widetilde{P},\widetilde{Q}),~
\widetilde{F}:= F(\widetilde{P},\widetilde{Q}).
$$
By \eqref{note:DFg} and \eqref{energy}, the energy error can be bounded as follows
\begin{align}\label{e:energy_error_bounds}
\left| E(\widetilde{P},\widetilde{Q}) - E(P,Q) \right|
&\leq \int_0^{2\pi} \int_{\varepsilon}^\gamma r \left[
        \left| g(\widetilde{F}) - g(F) \right| +
        \left| h(\widetilde{D}) - h(D) \right| \right]
        \mathrm{d} r \mathrm{d} \theta \nonumber \\
&\leq \int_0^{2\pi} \int_{\varepsilon}^\gamma r^{1-p} \kappa
        \left| \left( 2r^2 \widetilde{F} \right)^{p/2}
             - \left( 2r^2 F \right)^{p/2} \right|
        \mathrm{d} r \mathrm{d} \theta \nonumber \\
&\qquad + \int_0^{2\pi} \int_{\varepsilon}^\gamma
        r \left| h(\widetilde{D}) - h(D) \right|
        \mathrm{d} r \mathrm{d} \theta
    \triangleq I + II.
\end{align}

By the hypotheses (H1), (H2), and as a consequence of Theorem~\ref{thm:PQ}, we have $(P,Q)$, $(\widetilde{P},\widetilde{Q})$ and their first order derivatives are all bounded, and
\begin{subequations}
\label{error_INM}
\begin{align}
\| \widetilde{P} - P \|_{\omega,\Omega'}
    &\leq c \left( M^{-l} + N^{-l} \right) ,&
    \| \widetilde{Q} - Q \|_{\omega,\Omega'}
    &\leq c \left( M^{-l} + N^{-l} \right) ,\\
\| \widetilde{P}_\rho - P_\rho \|_{\omega,\Omega'}
    &\leq c \left( M^{2-l} + N^{-l} \right) ,&
    \| \widetilde{Q}_\rho - Q_\rho \|_{\omega,\Omega'}
    &\leq c \left( M^{2-l} + N^{-l} \right) ,\\
\| \widetilde{P}_\phi - P_\phi \|_{\omega,\Omega'}
    &\leq c \left( M^{-l} + N^{1-l} \right) ,&
    \| \widetilde{Q}_\phi - Q_\phi \|_{\omega,\Omega'}
    &\leq c \left( M^{-l} + N^{1-l} \right) ,
\end{align}
\end{subequations}
where $\| \cdot\|_{\omega,\Omega'}$ is the weighted $L^2$-norm on $\Omega'$.

Thus, by \eqref{note:rDrF} and recalling $\rho_r = 2/(\gamma-\varepsilon)$, we have
\begin{align*}
\left| r\widetilde{D}-rD \right|
&\leq \rho_r \left( \left| \widetilde{P} \widetilde{P}_\rho (\widetilde{Q}_\phi+1)
        - P P_\rho (Q_\phi+1) \right| +
        \left| \widetilde{P} \widetilde{P}_\phi \widetilde{Q}_\rho
        - P P_\phi Q_\rho \right| \right) \\
&\leq c \left(  \left| \widetilde{P} - P \right| +
                \left| \widetilde{P}_\rho - P_\rho \right| +
                \left| \widetilde{P}_\phi - P_\phi \right| +
                \left| \widetilde{Q}_\rho - Q_\rho \right| +
                \left| \widetilde{Q}_\phi - Q_\phi \right| \right) ,\\
\left| 2r^2\widetilde{F} - 2r^2F \right|
&\leq r^2 \rho_r^2 \left| \widetilde{P}_\rho^2 + \widetilde{P}^2 \widetilde{Q}_\rho^2
        - ( P_\rho^2 + P^2 Q_\rho^2 ) \right|
        + \left| \widetilde{P}_\phi^2 - P_\phi^2 \right| \\
&\qquad + \left| \widetilde{P}^2 (\widetilde{Q}_\phi+1)^2 - P^2 (Q_\phi+1)^2 \right| \\
&\leq c \left(  \left| \widetilde{P} - P \right| +
                \left| \widetilde{P}_\rho - P_\rho \right| +
                \left| \widetilde{P}_\phi - P_\phi \right| +
                \left| \widetilde{Q}_\rho - Q_\rho \right| +
                \left| \widetilde{Q}_\phi - Q_\phi \right| \right) ,
\end{align*}
and as a consequence, it follows from \eqref{error_INM} that
\begin{subequations}
\begin{align}
\| r\widetilde{D}-rD \|_{\omega,\Omega'} &\leq c \left(M^{2-l}+N^{1-l} \right),
\label{e:rD_error}   \\ \label{e:2r2F_error}
\| 2r^2\widetilde{F} - 2r^2F \|_{\omega,\Omega'} &\leq c \left(M^{2-l}+N^{1-l} \right).
\end{align}
\end{subequations}

By hypothesis (H1), (H2) and Theorem~\ref{thm:PQ}, both $D>0$ and $\widetilde{D}>0$ are bounded away from $0$ and $+\infty$, hence, by \eqref{e:rD_error}, we have
\begin{align}\label{e:II}
II &= \int_0^{2\pi} \int_{\varepsilon}^\gamma
        r |h'(\vartheta_1)| \left| \widetilde{D} - D \right|
        \mathrm{d} r \mathrm{d} \theta
    \leq c \int_0^{2\pi} \int_{-1}^1
        \left| r \widetilde{D} - r D \right|
        \mathrm{d} \rho \mathrm{d} \phi \nonumber \\
&   \leq c \| r \widetilde{D}-r D \|_{\omega,\Omega'}
    \leq c \left(M^{2-l}+N^{1-l} \right),
\end{align}
where $\vartheta_1$ is between $\widetilde{D}$ and $D$, and thus $h'(\vartheta_1)$ is bounded.

On the other hand, let $\varepsilon = r_0 < r_1 < \cdots < r_K = \gamma$ be an
auxiliary grid in radial direction satisfying the conditions of Lemma~\ref{lem:grid},
then, by \eqref{energy_bound} and \eqref{ineq:KE}, we have
\begin{align}\label{e:I/E_1}
\frac{I}{ E_{(\varepsilon,\gamma)} }
 &< \displaystyle \sum_{i=1}^K \int_{ \Omega_{(r_{i-1},r_i)} } \frac{2^{p-1} c^{2}}{
 K E_{(r_{i-1},r_i)}}\,
 \kappa \left| \left( 2r^2 \widetilde{F} \right)^{p/2}
   - \left( 2r^2 F \right)^{p/2} \right|\, \mathrm{d} r \mathrm{d} \theta
   \nonumber    \\
& \leq 2^{p-1} c^3 \kappa \sum_{i=1}^K \frac{1}{K \tau_i}
               \left( \frac{r_{i-1}}{r_i} \right)^{1-p}
                \int_{ \Omega_{(r_{i-1},r_i)} }
                \left| \left( 2r^2 \widetilde{F} \right)^{p/2}
                    - \left( 2r^2 F \right)^{p/2} \right|
                \mathrm{d} r \mathrm{d} \theta .
\end{align}
Since hypotheses (H1), (H2) and Lemma~\ref{lem:Liguo} implies that both $r^2F$ and
$r^2\widetilde{F}$ are bounded, by the H\"{o}lder inequality, we have
\begin{align*}
&\int_{ \Omega_{(r_{i-1},r_i)} }
        \left| \left( 2r^2 \widetilde{F} \right)^{p/2}
            - \left( 2r^2 F \right)^{p/2} \right|
        \mathrm{d} r \mathrm{d} \theta
    = \int_{ \Omega_{(r_{i-1},r_i)} }
        \frac{p}{2}| \vartheta_2^{p/2-1} |
        \left| 2r^2 \widetilde{F} - 2r^2 F \right|
        \mathrm{d} r \mathrm{d} \theta \\
&\leq c \int_{ \Omega_{(r_{i-1},r_i)} }
        \left| 2r^2 \widetilde{F} - 2r^2 F \right|
        \mathrm{d} r \mathrm{d} \theta
    \leq c \sqrt{\tau_i} \| 2r^2 \widetilde{F}
        - 2r^2 F \|_{L^2( \Omega_{(r_{i-1},r_i)} )} ,
\end{align*}
where $\vartheta_2$ is between $2r^2 \widetilde{F}$ and $2r^2 F$, and thus
$|\vartheta_2^{p/2-1}|$ is also bounded. Substituting this into \eqref{e:I/E_1}
and applying the H\"{o}lder inequality, we have,
by \eqref{ineq:rtau} and \eqref{e:2r2F_error},
\begin{align}\label{e:I/E_2}
\frac{I}{ E_{(\varepsilon,\gamma)} }
&\leq c \sum_{i=1}^K \frac{1}{K \sqrt{\tau_i} }
               \left( \frac{r_i}{r_{i-1}} \right)^{p-1} \cdot
               \| 2r^2 \widetilde{F}
                    - 2r^2 F \|_{L^2( \Omega_{(r_{i-1},r_i)} )} \nonumber \\
&\leq c \left[ \sum_{i=1}^K \frac{1}{K^2} \cdot \frac{1}{\tau_i}
               \left( \frac{r_i}{r_{i-1}} \right)^{2p-2} \right]^{1/2} \cdot
        \left[ \sum_{i=1}^K
                \| 2r^2 \widetilde{F}
                    - 2r^2 F \|_{L^2( \Omega_{(r_{i-1},r_i)} )}^2
                \right]^{1/2} \nonumber  \\
&\leq c \| 2r^2 \widetilde{F} - 2r^2 F \|_{L^2( \Omega_{(\varepsilon,\gamma)} )}
    \leq c \| 2r^2 \widetilde{F} - 2r^2 F \|_{\omega,\Omega'}
    \leq c \left(M^{2-l}+N^{1-l} \right) .
\end{align}
The proof is completed by combining\eqref{e:energy_error_bounds}, \eqref{e:II} and
\eqref{e:I/E_2}.
\end{proof}

Notice that $\mathcal{B}_+^{NM} \subset \mathcal{B}_+$, the result of Theorem~\ref{thm:E_tilde}
allows us to obtain the following elastic energy error estimate for the discrete cavitation solution.
\begin{thm}
\label{thm:E_NM}
Let a cavitation solution $(P,Q)$ satisfy the hypotheses (H1), (H2) and be a
global energy minimizer of $E(\cdot,\cdot)$ in $\mathcal{B}_+$. Let $(P^{NM},Q^{NM})$
be a global energy minimizer of $E(\cdot,\cdot)$ in $\mathcal{B}_+^{NM}$. Then,
for $M$, $N$ sufficiently large, there exists a constant $C>0$ such that
\begin{equation}
\label{e:E^NM}
E(P,Q) \le E(P^{NM},Q^{NM}) \le E(P,Q) + C \left( M^{2-l} + N^{1-l}\right).
\end{equation}
\end{thm}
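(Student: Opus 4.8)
The plan is to establish the two-sided bound in \eqref{e:E^NM} by exploiting the variational characterizations of $(P,Q)$ and $(P^{NM},Q^{NM})$ together with the interpolation energy estimate of Theorem~\ref{thm:E_tilde}. The lower bound $E(P,Q) \le E(P^{NM},Q^{NM})$ is immediate: since $\mathcal{B}_+^{NM} \subset \mathcal{B}_+$ and $(P,Q)$ is by hypothesis the global energy minimizer of $E(\cdot,\cdot)$ over all of $\mathcal{B}_+$, any admissible discrete competitor, in particular the discrete minimizer $(P^{NM},Q^{NM})$, must have energy no smaller than $E(P,Q)$.

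For the upper bound, the key observation is that the interpolant $(I^{NM}P, I^{NM}Q)$ is a legitimate element of the discrete admissible set. First I would verify that $I^{NM}P$ and $I^{NM}Q$ are Fourier-Chebyshev polynomials of the required form lying in $\mathcal{B}^{NM}$: the interpolation operator of Definition~\ref{defn:INM} maps into $\mathcal{B}^{NM}$ by construction, and the boundary-matching nodes $\rho_m = \cos(m\pi/M)$ include $\rho_0 = 1$, so the interpolant inherits the boundary values $P_0(1,\phi_n)$, $Q_0(1,\phi_n)$ required in \eqref{note:B_NM}. Next I must confirm $(I^{NM}P, I^{NM}Q) \in \mathcal{B}_+$, i.e. that $D(I^{NM}P, I^{NM}Q) > 0$ on $\overline{\Omega'}$. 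This is where Theorem~\ref{thm:PQ} enters: by the determinant estimate \eqref{det_error}, $\| D(I^{NM}P,I^{NM}Q) - D(P,Q) \|_{C(\overline{\Omega'})} \to 0$ as $M, N \to \infty$, and since hypothesis (H2) gives $D(P,Q) \ge c_D^{-1} > 0$ uniformly, for $M, N$ sufficiently large the interpolated determinant stays positive. Hence $(I^{NM}P, I^{NM}Q) \in \mathcal{B}_+^{NM}$.

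With admissibility secured, the chain closes quickly. Since $(P^{NM},Q^{NM})$ minimizes $E$ over $\mathcal{B}_+^{NM}$ and $(I^{NM}P, I^{NM}Q)$ is a competitor in that set, I have
\begin{align*}
E(P^{NM},Q^{NM}) \le E(I^{NM}P, I^{NM}Q).
\end{align*}
Theorem~\ref{thm:E_tilde} then bounds the right-hand side by $E(P,Q) + C(M^{2-l} + N^{1-l})$, delivering the upper estimate and completing the proof.

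The only genuine subtlety, and the step I expect to require the most care, is the positivity of the interpolated determinant: one must ensure that the threshold on $M, N$ beyond which $D(I^{NM}P, I^{NM}Q) > 0$ can be chosen uniformly, which is exactly what the $C(\overline{\Omega'})$-convergence in \eqref{det_error} combined with the uniform lower bound $c_D^{-1}$ from (H2) provides. Everything else is a direct assembly of previously established inequalities.
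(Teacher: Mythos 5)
Your proposal is correct and follows essentially the same route as the paper's proof: the lower bound from the inclusion $\mathcal{B}_+^{NM} \subset \mathcal{B}_+$, membership of the interpolant $(I^{NM}P, I^{NM}Q)$ in $\mathcal{B}_+^{NM}$ for large $M$, $N$ via the determinant estimate \eqref{det_error}, and the upper bound from $E(P^{NM},Q^{NM}) \le E(I^{NM}P, I^{NM}Q)$ combined with Theorem~\ref{thm:E_tilde}. Your additional remarks (boundary-node matching at $\rho_0 = 1$ and the uniform positivity threshold via (H2)) merely make explicit details the paper leaves implicit.
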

\begin{proof}
The first inequality is a direct consequence of $\mathcal{B}_+^{NM} \subset \mathcal{B}_+$.
By \eqref{det_error} and for $M$, $N$ sufficiently large, we have $(I^{NM}P,I^{NM}Q) \in \mathcal{B}_+^{NM}$. Then, the second inequality follows from Theorem~\ref{thm:E_tilde} and
$E(P^{NM},Q^{NM}) \le E(I^{NM}P, I^{NM}Q)$.
\end{proof}

Let $(P,Q)\in \mathcal{B}_+$ be a global energy minimizer of
$E(\cdot,\cdot)$ in $\mathcal{B}_+$, let $(I^{NM}P,I^{NM}Q)\in \mathcal{B}_+^{NM}$
be its interpolation functions. Let $(P^{NM},Q^{NM})\in \mathcal{B}_+^{NM}$ be
global energy minimizers of $E(\cdot,\cdot)$ in $\mathcal{B}_+^{NM}$. Denote
$\mathbf{\bar{u}}$, $\mathbf{\bar{u}}^{NM}$ and $\mathbf{U}^{NM}$ as the corresponding
functions on $\Omega_{(\varepsilon,\gamma)}$ defined by $(P,Q)$,
$(I^{NM}P,I^{NM}Q)$ and $(P^{NM},Q^{NM})$ respectively via the coordinates transformations
\eqref{coordinate:polar} and \eqref{coordinate:rho_phi}.
Then, \eqref{e:E^NM} can be rewritten as
\begin{equation}\label{e:E(u)NM}
E(\mathbf{\bar{u}}) = \inf_{\mathbf{v}\in \mathcal{A}_{\varepsilon}} E(\mathbf{v}) \le
E(\mathbf{U}^{NM})  \le E(\mathbf{\bar{u}}^{NM}) \le
E(\mathbf{\bar{u}}) + C\left( M^{2-l} + N^{1-l}\right).
\end{equation}

According to Theorem~4.9 in \cite{Su2015} and its proof, for a conforming discrete
approximation method of the cavitation problem, the inequality \eqref{e:E(u)NM} implies
the convergence of the discrete cavitation solutions. Thus, we have the following
convergence theorem. For the convenience of the readers, we sketch its proof below.

\begin{thm}
Let a cavitation solution $(P,Q)$ satisfy the hypotheses (H1), (H2) and be a global
energy minimizer of $E(\cdot,\cdot)$ in $\mathcal{B}_+$. Let $(P^{NM},Q^{NM})$
be global energy minimizers of $E(\cdot,\cdot)$ in $\mathcal{B}_+^{NM}$.
Let $\mathbf{U}^{NM}$ correspond to $(P^{NM},Q^{NM})$ under the transformations
\eqref{coordinate:polar} and \eqref{coordinate:rho_phi}. Then, there exists a subsequence, still
denoted as $\{\mathbf{U}^{NM}\}$, and a function $\mathbf{u} \in
\mathcal{A}_{(\varepsilon,\gamma)}( \mathbf{u}_0 )$,
such that $\mathbf{U}^{NM} \rightarrow \mathbf{u}$ in
$W^{1,p}(\Omega_{(\varepsilon,\gamma)})$ and $\mathbf{u}$ is a global
energy minimizer of $E(\cdot)$ in $\mathcal{A}_{(\varepsilon,\gamma)}( \mathbf{u}_0 )$.
\end{thm}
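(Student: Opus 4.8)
The plan is to run the direct method of the calculus of variations on $\{\mathbf{U}^{NM}\}$, viewed as a minimizing sequence, and then to upgrade the resulting weak convergence to strong $W^{1,p}$ convergence by a norm-convergence argument. First I would read off from \eqref{e:E(u)NM} that $E(\mathbf{U}^{NM}) \to E(\mathbf{\bar{u}}) = \inf_{\mathbf{v} \in \mathcal{A}_{(\varepsilon,\gamma)}(\mathbf{u}_0)} E(\mathbf{v})$ as $M,N \to \infty$, so that $\{\mathbf{U}^{NM}\}$ is a minimizing sequence for $E$ over the admissible set. The coercive term $\kappa|\nabla\mathbf{U}^{NM}|^p$ in $W$ then bounds $\|\nabla\mathbf{U}^{NM}\|_{L^p}$ uniformly, and together with the prescribed trace on $\partial\mathbb{B}_\gamma(\mathbf{0})$ and Poincar\'e's inequality this bounds $\{\mathbf{U}^{NM}\}$ in $W^{1,p}(\Omega_{(\varepsilon,\gamma)})$. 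Since $1<p<\infty$, $W^{1,p}$ is reflexive, so I extract a subsequence (not relabelled) with $\mathbf{U}^{NM}\rightharpoonup\mathbf{u}$ weakly in $W^{1,p}$; by the Rellich--Kondrachov theorem $\mathbf{U}^{NM}\to\mathbf{u}$ strongly in $L^p$ and, along a further subsequence, a.e.

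Next I would verify that the weak limit $\mathbf{u}$ lies in $\mathcal{A}_{(\varepsilon,\gamma)}(\mathbf{u}_0)$. The Dirichlet datum on $\partial\mathbb{B}_\gamma(\mathbf{0})$ passes to the limit by weak continuity of the trace. The crucial point is the control of the Jacobian: the uniform bound on $\int_{\Omega_{(\varepsilon,\gamma)}} h(\det\nabla\mathbf{U}^{NM})$ together with the superlinear growth $h(t)/t\to+\infty$ in \eqref{note:h} makes $\{\det\nabla\mathbf{U}^{NM}\}$ equi-integrable by the de la Vall\'ee--Poussin criterion; invoking the weak continuity of the two-dimensional Jacobian in this equi-integrable regime (as established in the proof of Theorem~4.9 of \cite{Su2015}) gives $\det\nabla\mathbf{U}^{NM}\rightharpoonup\det\nabla\mathbf{u}$. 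The blow-up $h(t)\to+\infty$ as $t\to0^+$ then forces $\det\nabla\mathbf{u}>0$ a.e., since otherwise $\int h(\det\nabla\mathbf{u})$ would be infinite, contradicting the finiteness obtained from lower semicontinuity. The almost-everywhere injectivity of $\mathbf{u}$ is preserved under these limits by the Ciarlet--Ne\v{c}as argument used in \cite{Su2015}, so $\mathbf{u}\in\mathcal{A}_{(\varepsilon,\gamma)}(\mathbf{u}_0)$.

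I would then conclude optimality and strong convergence together. Because $W(\mathbf{A})=\kappa|\mathbf{A}|^p+h(\det\mathbf{A})$ is polyconvex, $E$ is sequentially weakly lower semicontinuous, so $E(\mathbf{u})\le\liminf E(\mathbf{U}^{NM})=\inf_{\mathcal{A}_{(\varepsilon,\gamma)}}E$; since $\mathbf{u}$ is admissible the reverse inequality is automatic, whence $E(\mathbf{u})=\inf E$ and $\mathbf{u}$ is a global minimizer. For the strong convergence I split $E=E_1+E_2$ with $E_1(\mathbf{v})=\kappa\int|\nabla\mathbf{v}|^p$ and $E_2(\mathbf{v})=\int h(\det\nabla\mathbf{v})$. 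Each of $E_1,E_2$ is separately weakly lower semicontinuous, and $E_1(\mathbf{U}^{NM})+E_2(\mathbf{U}^{NM})\to E_1(\mathbf{u})+E_2(\mathbf{u})$; passing to a subsequence along which $E_1$ and $E_2$ individually converge and comparing with the two lower-semicontinuity inequalities forces $E_1(\mathbf{U}^{NM})\to E_1(\mathbf{u})$, i.e. $\|\nabla\mathbf{U}^{NM}\|_{L^p}\to\|\nabla\mathbf{u}\|_{L^p}$. Since $L^p$ is uniformly convex for $1<p<2$, norm convergence together with the weak convergence $\nabla\mathbf{U}^{NM}\rightharpoonup\nabla\mathbf{u}$ yields $\nabla\mathbf{U}^{NM}\to\nabla\mathbf{u}$ strongly in $L^p$; combined with the strong $L^p$ convergence of the functions themselves this gives $\mathbf{U}^{NM}\to\mathbf{u}$ in $W^{1,p}(\Omega_{(\varepsilon,\gamma)})$.

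The hard part will be the Jacobian step: in two dimensions the classical weak continuity of $\det\nabla\mathbf{v}$ under weak $W^{1,p}$ convergence is only guaranteed for $p\ge2$, whereas here $1<p<2$, so one cannot invoke it directly. The argument must genuinely use the equi-integrability furnished by the coercivity of $h$---rather than mere boundedness of the energy---to justify passing to the limit in the distributional Jacobian and to rule out $\det\nabla\mathbf{u}=0$ on a set of positive measure; this, together with the accompanying preservation of a.e. injectivity, is exactly the technical content borrowed from \cite{Su2015}.
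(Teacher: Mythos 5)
Your overall architecture is essentially the paper's: equi-integrability of $|\nabla\mathbf{U}^{NM}|^p$ and $\det\nabla\mathbf{U}^{NM}$ via De La Vall\'{e}e Poussin, weak $W^{1,p}$ compactness plus weak $L^1$ compactness of the Jacobians, identification of the limiting Jacobian and a.e.\ injectivity via the Henao--Mora-Corral results (which is what underlies Theorem~4.9 of \cite{Su2015}), weak lower semicontinuity for minimality, and finally convergence of the $p$-Dirichlet part of the energy combined with uniform convexity to upgrade weak to strong $W^{1,p}$ convergence. Your last step, phrased as a two-term splitting $E=E_1+E_2$, is exactly the paper's computation: convexity of $h$ and the Jacobian convergence give lower semicontinuity of the $h$-part, and energy convergence then forces $\limsup \int|\nabla\mathbf{U}^{NM}|^p \le \int|\nabla\mathbf{u}|^p$, whence strong convergence by the Radon--Riesz property.

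There is, however, one genuine ordering flaw in your treatment of the crucial Jacobian step. You first invoke ``weak continuity of the Jacobian in the equi-integrable regime'' to conclude $\det\nabla\mathbf{U}^{NM}\rightharpoonup\det\nabla\mathbf{u}$, and only then use this identification to deduce $\det\nabla\mathbf{u}>0$ a.e.\ via the blow-up of $h$ at $0^+$. But the identification theorem you are implicitly relying on (Theorem~3 of \cite{Henao2010} together with Theorem~3 of \cite{Henao2011}, which is what both Su's Theorem~4.9 and this paper rest on) takes the positivity of the weak $L^1$ limit of the Jacobians as a \emph{hypothesis}: for $1<p<2$ the weak $L^1$ limit $\zeta$ of positive Jacobians can a priori vanish on a set of positive measure (degeneration), and in that regime the conclusion $\zeta=\det\nabla\mathbf{u}$ can fail. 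As written, your argument is circular. The paper avoids this by working with the abstract weak $L^1$ limit $\zeta$ of $\det\nabla\mathbf{U}^{NM}$ (Dunford--Pettis) and proving $\zeta>0$ a.e.\ \emph{first}, by contradiction: if $\zeta=0$ on a set $S$ of positive measure, then along a subsequence $\det\nabla\mathbf{U}^{NM}\to 0$ a.e.\ on $S$, so $h(\det\nabla\mathbf{U}^{NM})\to\infty$ a.e.\ on $S$, and Fatou's lemma forces $E(\mathbf{U}^{NM})\to\infty$, contradicting \eqref{e:E(u)NM}. Only then does it apply the Henao--Mora-Corral theorems, with $\zeta>0$ and the continuity of $\mathbf{U}^{NM}$ as input, to obtain $\zeta=\det\nabla\mathbf{u}$ a.e.\ and the a.e.\ injectivity of $\mathbf{u}$. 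The repair is simple: run your positivity argument on $\zeta$ rather than on $\det\nabla\mathbf{u}$, then invoke the identification; everything else in your proposal goes through.
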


\begin{proof}
Since $h>0$ is a convex function satisfying the growth conditions
\eqref{note:h}, $1<p<2$ and $\mathbf{U}^{NM}$ satisfies the Direchlet boundary condition,
by \eqref{e:E(u)NM} and the De La Vall\'{e}e Poussin theorem \cite{Poussin}, we conclude that
$\{|\nabla \mathbf{U}^{NM}|^p\}_{N,M \rightarrow \infty}$ and $\{\det \nabla
\mathbf{U}^{NM}\}_{N,M \rightarrow \infty}$ are equi-integrable. As a consequence,
there exists a subsequence, still denoted as $\{\mathbf{U}^{NM}\}$, a function
$\mathbf{u} \in W^{1,p}(\Omega_{(\varepsilon,\gamma)})$ and a function
$\zeta \in L^1(\Omega_{(\varepsilon,\gamma)})$ such that
\begin{equation}\label{e:Uweakconv}
\mathbf{U}^{NM} \rightharpoonup \mathbf{u} ~\text{in}~
    W^{1,p}(\Omega_{(\varepsilon,\gamma)}), \quad
    \mathbf{U}^{NM} \rightarrow \mathbf{u} ~a.e., \quad
\det \nabla \mathbf{U}^{NM} \rightharpoonup \zeta  ~\text{in}~
    L^1(\Omega_{(\varepsilon,\gamma)}).
\end{equation}

Hence, by $\det \nabla \mathbf{U}^{NM} >0$, $a.e.$, we have $\zeta \geq 0$, $a.e.$.
We conclude that $\zeta >0$, $a.e.$. Suppose otherwise, {\it i.e.}
$\zeta =0$ on a set $S$ with positive measure, then there exists a subsequence,
still denoted as $\{\mathbf{U}^{NM}\}$, such that $\int_S |\det \nabla \mathbf{U}^{NM}|
\mathrm{d} \mathbf{x} \rightarrow 0$ and $\det \nabla \mathbf{U}^{NM} \rightarrow 0$,
$a.e.$ on the set $S$, which, by \eqref{note:h}, implies
$h(\det \nabla \mathbf{U}^{NM}) \rightarrow \infty$, $a.e.$ on the set $S$.
Thus, by the Fatou lemma, we have and $E(\mathbf{U}^{NM}) \rightarrow \infty$, which
contradicts to $\displaystyle \varlimsup_{N,M\rightarrow \infty} E(\mathbf{U}^{NM}) <\infty$.

Thanks to Theorem~3 in \cite{Henao2010} and Theorem~3 in \cite{Henao2011}, as a
consequence of \eqref{e:Uweakconv}, $\zeta >0$, $a.e.$ and the continuity of
$\mathbf{U}^{NM}$, we have $\zeta = \det \nabla \mathbf{u}$, $a.e.$ and
$\mathbf{u}$ is one-to-one a.e.. In addition, it is easily verified that
$\mathbf{u}|_{\partial \Omega} = \mathbf{u}_0$. Hence
$\mathbf{u} \in \mathcal{A}_{(\varepsilon,\gamma)}( \mathbf{u}_0 )$. On the other hand, since
$\displaystyle E(\mathbf{u}) \leq \varliminf_{N,M\rightarrow \infty} E(\mathbf{U}^{NM})$,
due to the weakly lower semi-continuity of $E(\cdot)$ on
$W^{1,p}(\Omega_{(\varepsilon,\gamma)})$ (see the theorem 5.4 in \cite{Ball1981}),
we conclude from $\mathbf{u} \in \mathcal{A}_{(\varepsilon,\gamma)}( \mathbf{u}_0 )$
and \eqref{e:E(u)NM} that
$\mathbf{u}$ is a global minimizer of $E(\cdot)$ in
$\mathcal{A}_{(\varepsilon,\gamma)}( \mathbf{u}_0 )$ and
$\displaystyle E(\mathbf{u}) = \lim_{N,M\rightarrow \infty} E(\mathbf{U}^{NM})$.

Since $h$ is a convex function, it follows from $\det \nabla \mathbf{U}^{NM} \rightharpoonup
\det \nabla \mathbf{u}$ in $L^1(\Omega_{(\varepsilon,\gamma)})$ that
\begin{align*}
&E(\mathbf{u}) - \kappa \int_{\Omega_{(\varepsilon,\gamma)}}
    |\nabla \mathbf{u}|^p \mathrm{d} \mathbf{x}
= \int_{\Omega_{(\varepsilon,\gamma)}} h(\det \nabla \mathbf{u})
    \mathrm{d} \mathbf{x}
\leq \varliminf_{N,M\rightarrow \infty} \int_{\Omega_{(\varepsilon,\gamma)}}
    h(\det \nabla \mathbf{U}^{NM}) \mathrm{d} \mathbf{x} \\
&\quad
= \varliminf_{N,M\rightarrow \infty} \left( E(\mathbf{U}^{NM}) -
    \kappa \int_{\Omega_{(\varepsilon,\gamma)}} |\nabla \mathbf{U}^{NM}|^p
    \mathrm{d} \mathbf{x} \right)
= E(\mathbf{u}) - \kappa \varlimsup_{N,M\rightarrow \infty}
    \int_{\Omega_{(\varepsilon,\gamma)}} |\nabla \mathbf{U}^{NM}|^p
    \mathrm{d} \mathbf{x},
\end{align*}
which leads to $\displaystyle \varlimsup_{N,M\rightarrow \infty}
\norm{\mathbf{U}^{NM}}_{W^{1,p}(\Omega_{(\varepsilon,\gamma)})} \leq
\norm{\mathbf{u}}_{W^{1,p}(\Omega_{(\varepsilon,\gamma)})}$.
Thus, by the uniform convexity of $W^{1,p}(\Omega_{(\varepsilon,\gamma)})$
(see \cite{Adams1975}) and $\mathbf{U}^{NM} \rightharpoonup \mathbf{u}$ in
$W^{1,p}(\Omega_{(\varepsilon,\gamma)})$, it follows from proposition 3.30
in \cite{Brezis1983} that $\mathbf{U}^{NM} \rightarrow \mathbf{u}$ in
$W^{1,p}(\Omega_{(\varepsilon,\gamma)})$. This completes the proof.
\end{proof}

\section{Numerical Experiments and Results}

In our numerical experiments, the stored energy density function $W(\cdot)$
is taken of the form \eqref{note:W} with
\begin{align}
p=\frac32, \quad \kappa =\frac23, \quad
h(t)= 2^{-1/4} \left( \frac{(t-1)^2}{2}+\frac{1}{t} \right).
\end{align}
The reference configuration is $\Omega_{(\varepsilon,\gamma)} =
\mathbb{B}_\gamma(\mathbf{0}) \setminus \overline{\mathbb{B}_\varepsilon(\mathbf{0})}$,
$(0<\varepsilon \ll \gamma \leq 1)$. We consider $\mathbf{u}_0(\mathbf{x})=
\lambda \mathbf{x},~ \mathbf{x} \in \partial \mathbb{B}_\gamma( \mathbf{0})$,
$\lambda >1$ in the radially-symmetric case and $\mathbf{u}_0(\mathbf{x}) =
\left[ \lambda_1 x_1, \lambda_2 x_2 \right]^T$, $\mathbf{x} \in \partial
\mathbb{B}_\gamma( \mathbf{0})$, $\lambda_1,\lambda_2 >1$ in the non-radially-symmetric case.

By \eqref{det_error} and the hypotheses (H1), (H2), we expect to have
$(I^{NM}P,I^{NM}Q) \in \mathcal{B}_+^{NM}$ for sufficiently large $N$ and $M$.
Before proceeding to the numerical experiments, we first check in
Table~\ref{tab:incompress_det} the orientation preservation condition
$D(I^{NM}P,I^{NM}Q)>0$ for the exact cavitation solution $(P,Q)$ in the
radially-symmetric case for incompressible elastic materials,
since in such a case the cavity solutions have a simple explicit form
$R(r)=\sqrt{\lambda^2+r^2-\gamma^2}$ in the polar coordinate systems.
By $D(I^{NM}P,I^{NM}Q) = \frac{\rho_r}{r} \cdot I^{NM} P \cdot (I^{NM} P)_\rho$,
we only need to check whether $(I^{NM} P)_\rho >0$ is satisfied. In fact, whenever
$(I^{NM} P)_\rho >0$ is satisfied, we have $D(I^{NM}P,I^{NM}Q)\approx 1$.
The data shown in Table~\ref{tab:incompress_det} suggest that the orientation
preservation condition $D>0$ should not impose much real additional restrictions on
the choice of $N$ and $M$ in practical computations.

\begin{table}[H]
\centering
\footnotesize
\renewcommand{\arraystretch}{1.0}{
\caption{\footnotesize The minima of $(I^{NM} P)_\rho$ in $\Omega'$ in various cases (independent of $N$) }
\begin{tabular*}{\textwidth}{@{\extracolsep{\fill}}@{~~}ccrrrrrr}
\toprule 
    \raisebox{-2.00ex}[0cm][0cm]{ $\lambda \times \gamma$ } &
    \raisebox{-2.00ex}[0cm][0cm]{ $(\varepsilon,\gamma)$ } &
    \multicolumn{6}{c}{ $M$ } \\
    \cline{3-8}
    & & 4~~~ & 6~~~ & 8~~~ & 10~~~ & 12~~~ & 14~~~ \\
\midrule 
\multirow{3}{*}{ 2 }  & $(10^{-2},1)$ & 2.66e-3 & 2.86e-3
        & 2.86e-3 & 2.86e-3 & 2.86e-3 & 2.86e-3  \\[-1mm]
    & $(10^{-3},1) $  & 8.57e-5  & 2.92e-4
        & 2.88e-4 & 2.88e-4 & 2.88e-4 & 2.88e-4  \\[-1mm]
    & $(10^{-4},1) $  & -1.75e-4 & 3.27e-5
        & 2.88e-5 & 2.89e-5 & 2.89e-5 & 2.89e-5  \\[1mm]
\multirow{3}{*}{1.25}  & $(10^{-3},10^{-1})$ & 3.97e-5 & 3.97e-5
        & 3.97e-5 & 3.97e-5 & 3.97e-5 & 3.97e-5  \\[-1mm]
    & $(10^{-4},10^{-2})$   & 3.96e-7 & 3.96e-7
        & 3.96e-7 & 3.96e-7 & 3.96e-7 & 3.96e-7  \\[-1mm]
    & $(10^{-5},10^{-3}) $  & 3.96e-9 & 3.96e-9
        & 3.96e-9 & 3.96e-9 & 3.96e-9 & 3.96e-9  \\
\bottomrule 
\end{tabular*}
\label{tab:incompress_det} }
\end{table}

Next, we investigate the effect of the number of quadrature points $N',M'$ used in
\eqref{EL:final_D}. Figure \ref{fig:numer_integ} shows the convergence behavior of the
errors on the cavity radius for various $N'/N$ (with $M=32$ and $M'=8M$ fixed) and
$M'/M$ (with $N=16$ and $N'=2N$ fixed), where for the
non-radially-symmetric case $\Omega_{(\varepsilon,\gamma)}=\Omega_{(10^{-4},1)}$
with $\lambda_1 = 2.4$ and $\lambda_2 = 2$,
and for the radially-symmetric case $\Omega_{(\varepsilon,\gamma)}=\Omega_{(10^{-2},1)}$
with $\lambda = 2$. To balance the accuracy and computational cost, we set in our numerical
experiments below $N' = 2N$ and $M' = 8M$.

\begin{figure}[H]
\centering
\subfigure[$M=32$, non-symmetric]{
\includegraphics[width=4.8cm,height=5.4cm]{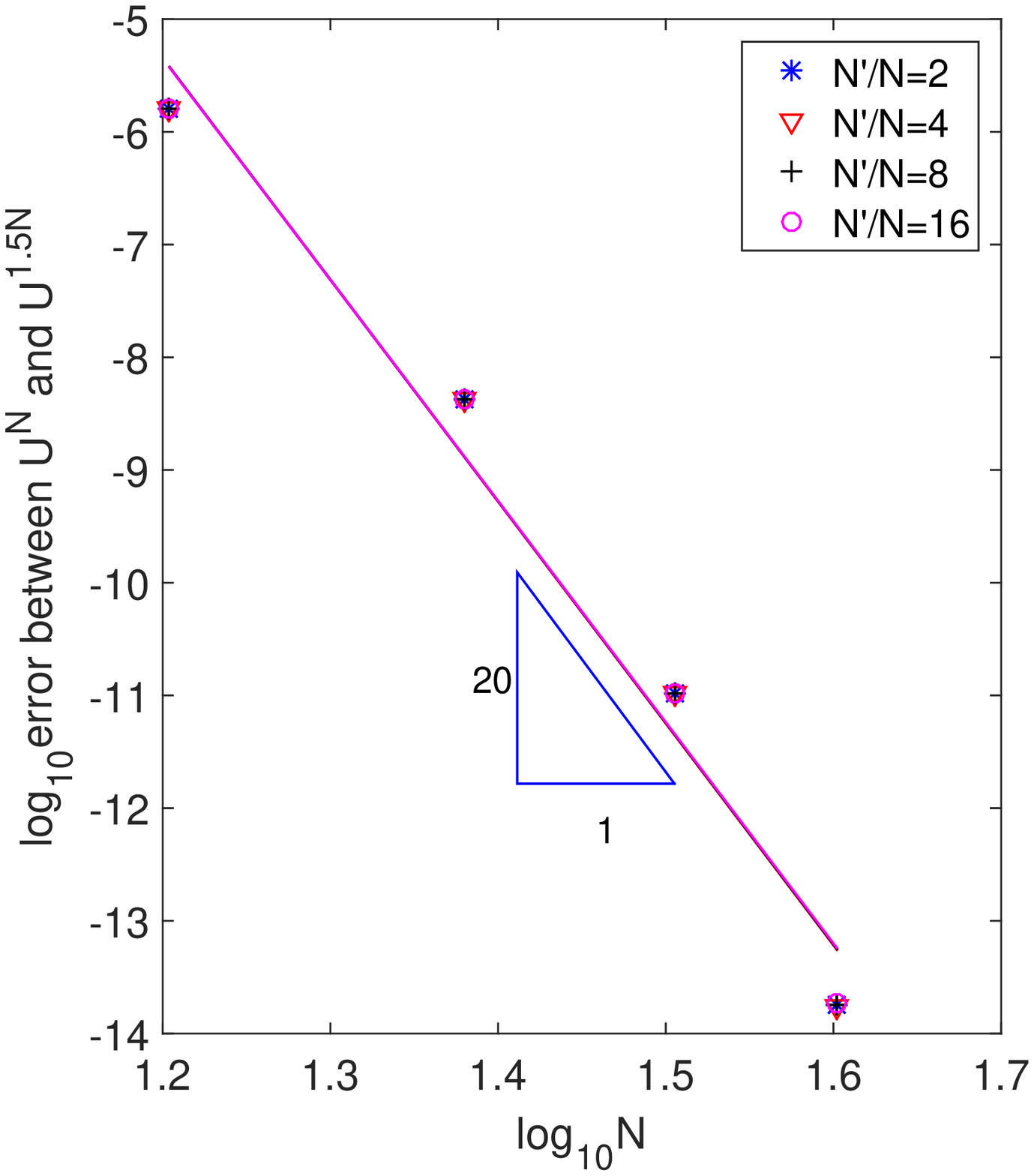}
}
\subfigure[$N=16$, non-symmetric]{
\includegraphics[width=4.8cm,height=5.4cm]{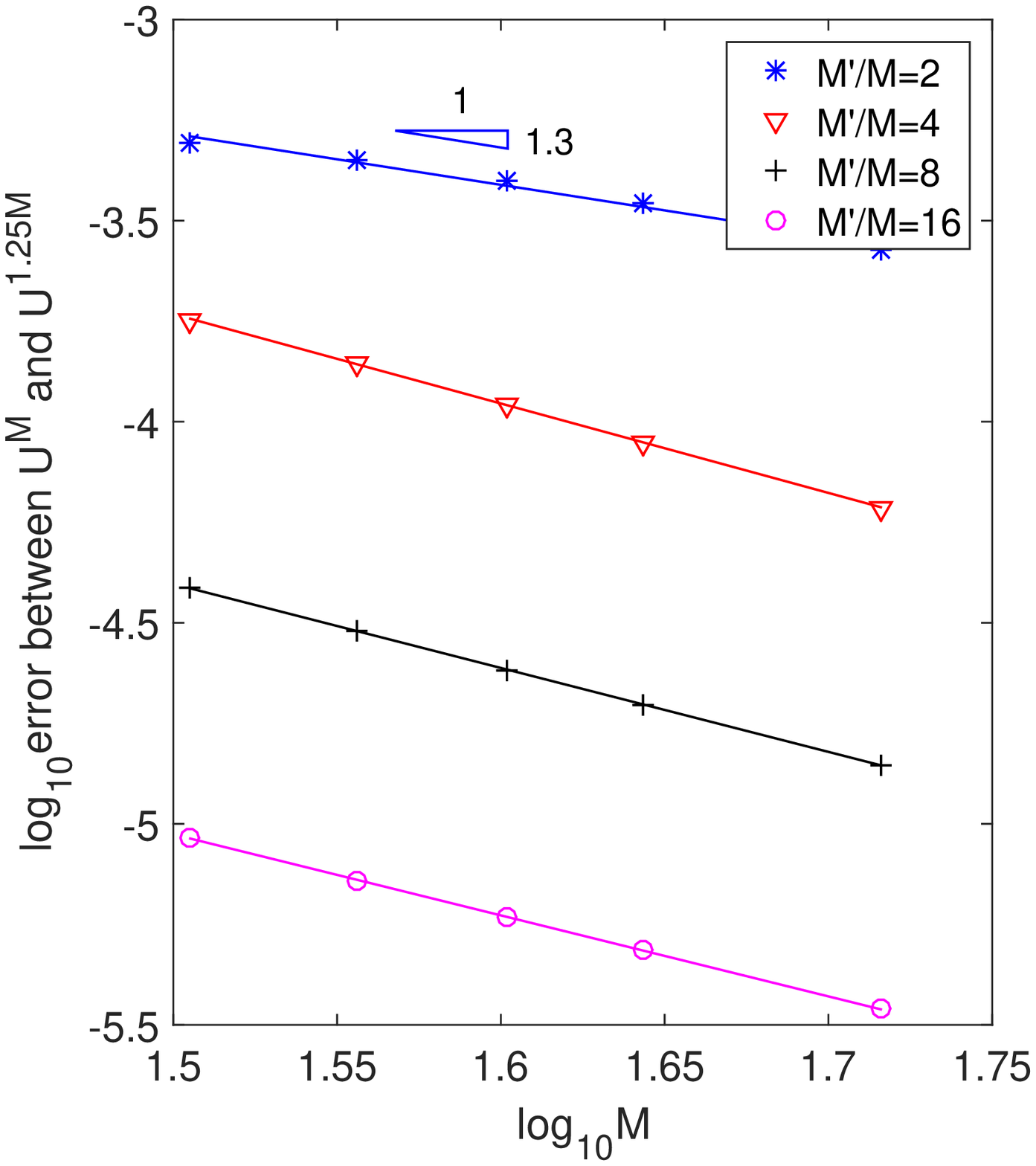}
}
\subfigure[$N=16$, symmetric]{
\includegraphics[width=4.8cm,height=5.4cm]{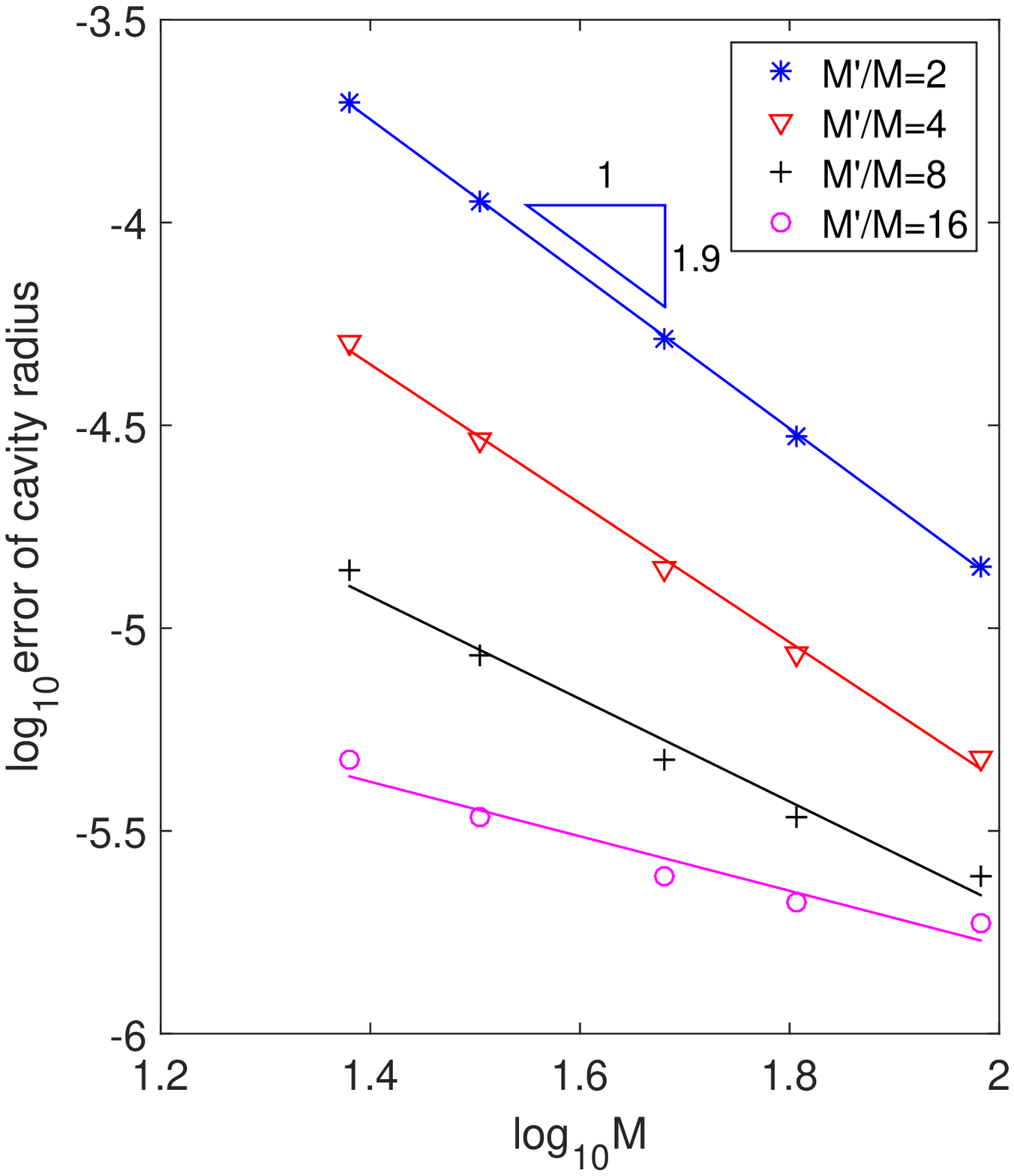}
}
\caption{Effect of $N'/N$, $M'/M$ on the cavity radius errors.}
\label{fig:numer_integ}
\end{figure}

\subsection{Radially-Symmetric Case}

In the radially-symmetric case with $\mathbf{u}_0(\mathbf{x})=\lambda \mathbf{x}$, $\lambda >1$,
the cavitation solution $\mathbf{u}$ can be written in polar coordinates systems as
\begin{equation*}
R = s(r),~~ \Theta=\theta, \quad
\forall~ (r,\theta) \in [\varepsilon,\gamma]\times [0,2\pi],
\end{equation*}
where $s(r)$ satisfies $s(\varepsilon)>0$ and $s(\gamma) = \lambda \cdot \gamma$. Since in theory the
numerical solution is independent of the circumferential DOF (degree of freedom) $N$,
we fix $N=16$ and examine the effect of the radial DOF $M$ on the numerical performance
of our method. In comparison, high precision numerical solutions
to the equivalent 1-dimensional ODE boundary value problems \cite{Sivaloganathan2009},
obtained by the {\it ode15s} routine in MATLAB with the tolerance $10^{-16}$, are
taken as the exact solutions.

For the standard case of $\gamma=1$, $\lambda = 2$, and
$\varepsilon= 10^{-3}$, $10^{-4}$,
the convergence behavior of our numerical cavitation solutions $\mathbf{U}^{NM}$
is shown in Figure~\ref{fig:error_ode_3}, \ref{fig:error_ode_4},
where $L^2_{\omega}$ and $W^{1,p}$ represent
$L^2_{\omega}(\Omega')$-norm and
$W^{1,p}(\Omega_{(\varepsilon,\gamma)})$-semi-norm respectively.

For $\gamma=1$ and $\varepsilon = 10^{-2},10^{-3},10^{-4}$, we show in
Figure \ref{fig:lambda_crit_symmetry} the numerical results obtained with $M=32$ on the
$\lambda$-$R^{NM}(\varepsilon)$ ({\it i.e.} the expansion rate on the outer boundary
against the cavity radius on the inner boundary) graph.
Figure~\ref{fig:lambda_crit_sivaloganathan} shows, for $\varepsilon=10^{-4}$,
the convergence of our numerical
results to that of the 1-dimensional ODE solution.

\begin{figure}[H]
\centering
\subfigure[$\varepsilon = 10^{-3}$, $N=16$.]{
\includegraphics[width=7.2cm,height=5.4cm]{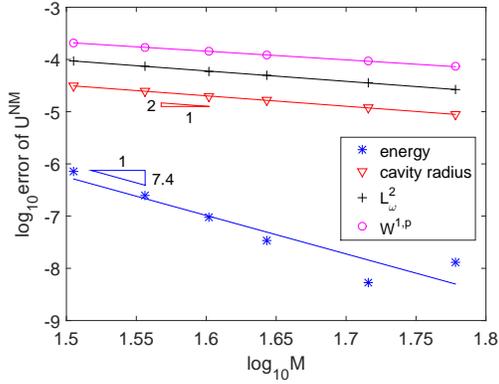}
\label{fig:error_ode_3}
}
\subfigure[$\varepsilon = 10^{-4}$, $N=16$.]{
\includegraphics[width=7.2cm,height=5.4cm]{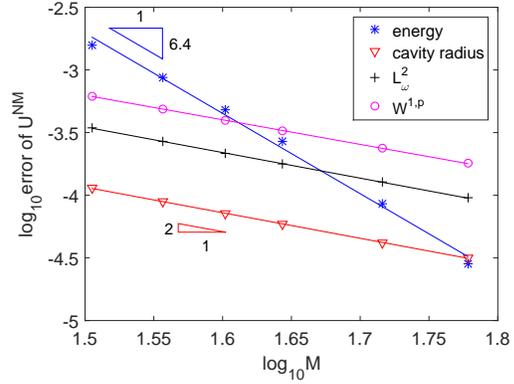}
\label{fig:error_ode_4}
}
\caption{The convergence behavior of radially-symmetric $\mathbf{U}^{NM}$ with $N=16$ fixed.}
\end{figure}

\begin{figure}[H]
\begin{minipage}{.5\linewidth}
\centering
\subfigure{ \includegraphics[width=7.2cm,height=5.4cm]{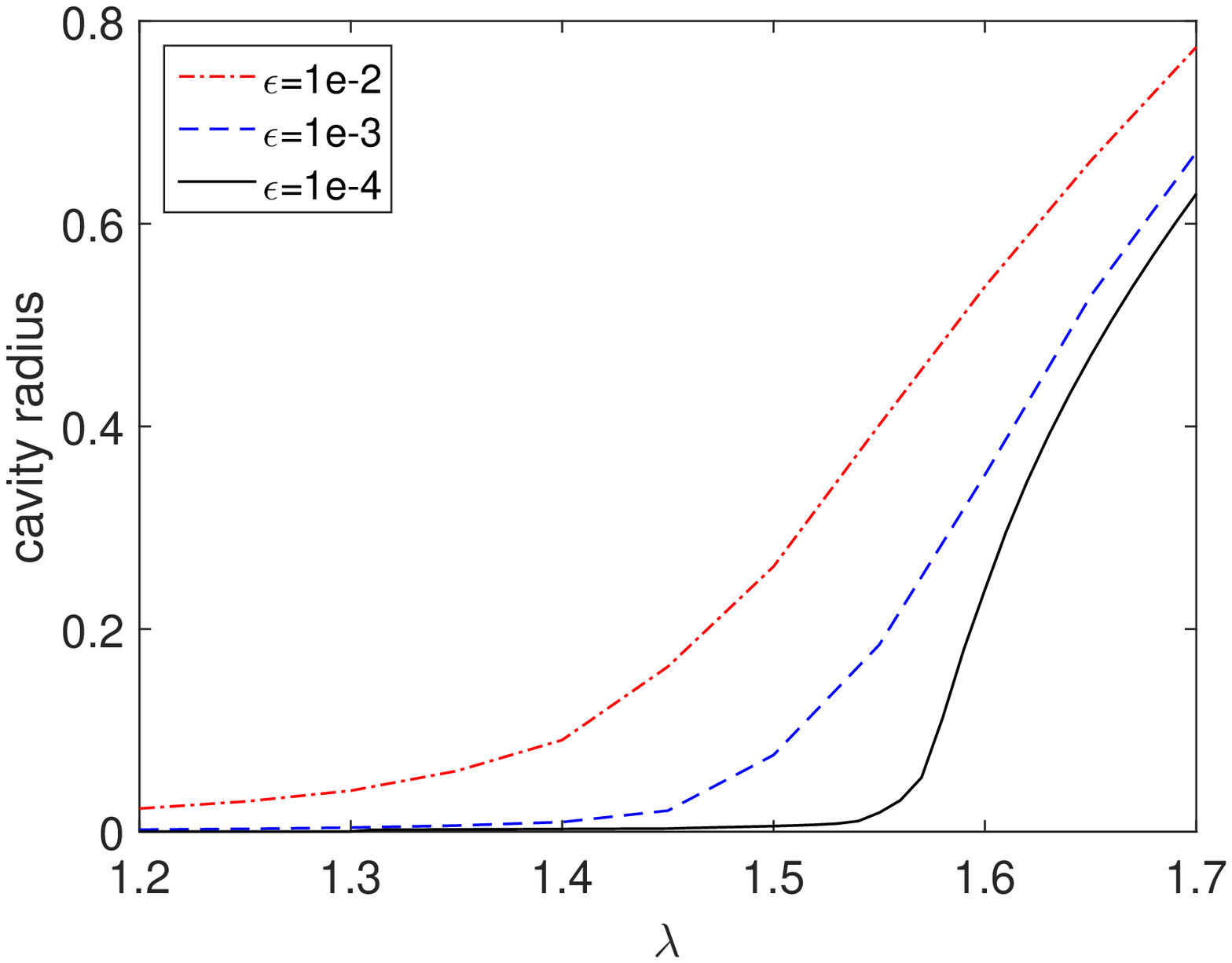} }
\caption{$\lambda$-$R^{NM}(\varepsilon)$, $N=16$, $M=32$.}
\label{fig:lambda_crit_symmetry}
\end{minipage}
\begin{minipage}{.1\linewidth} \end{minipage}
\begin{minipage}{.5\linewidth}
\centering
\subfigure{ \includegraphics[width=7.2cm,height=5.4cm]{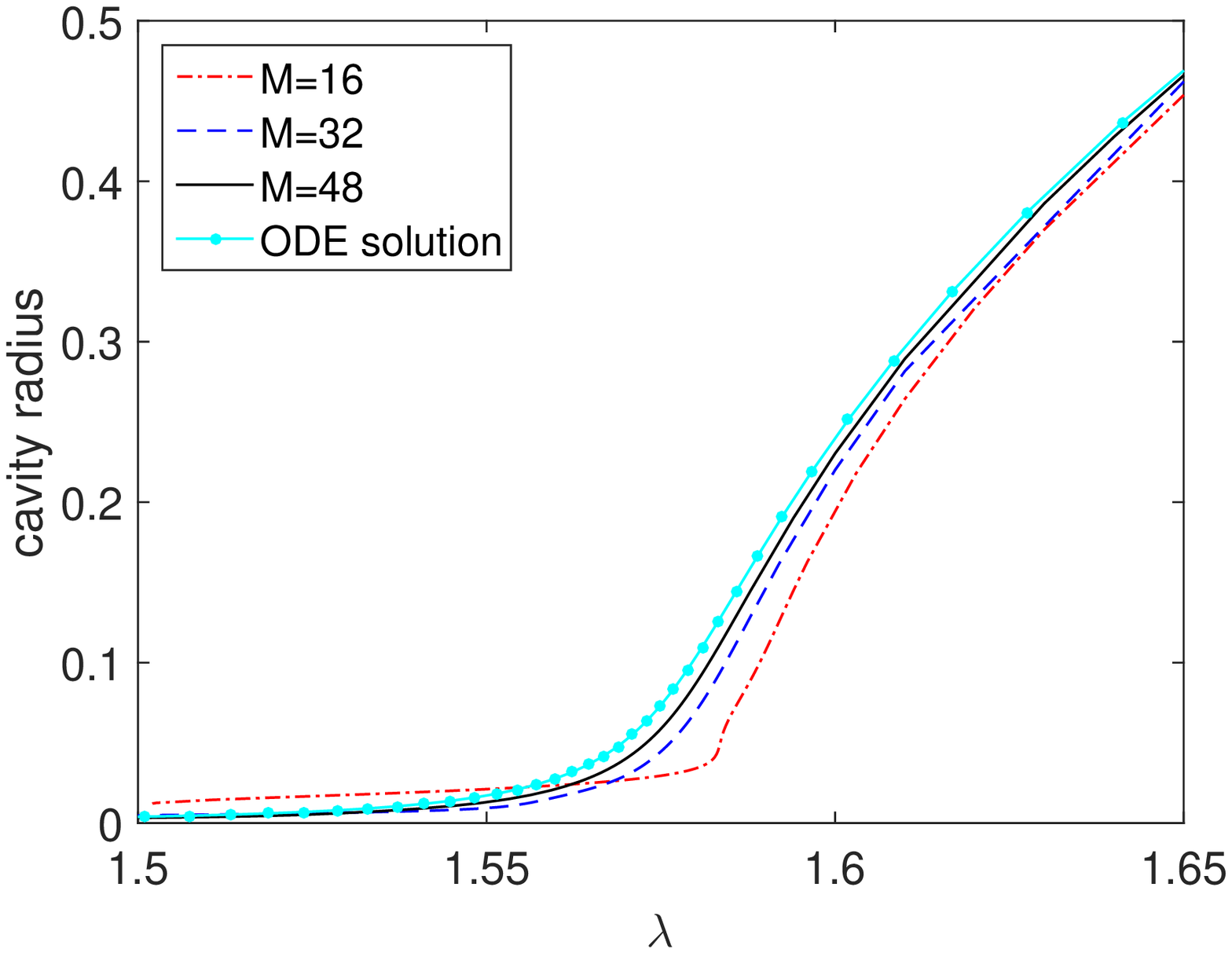} }
\caption{$\lambda$-$R^{NM}(10^{-4})$, $N=16$.}
\label{fig:lambda_crit_sivaloganathan}
\end{minipage}
\end{figure}

\begin{figure}[H]
\centering
\subfigure[radially-symmetric, energy error]{
\includegraphics[width=7.2cm,height=5.4cm]{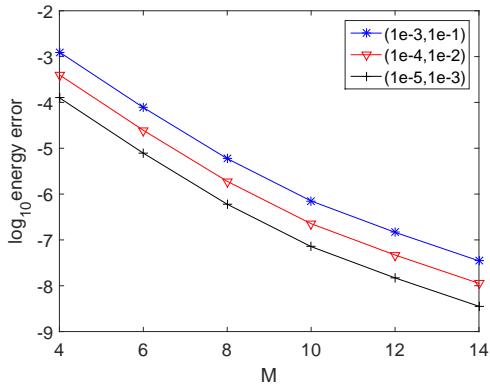}
}
\subfigure[radially-symmetric, cavity radius error]{
\includegraphics[width=7.2cm,height=5.4cm]{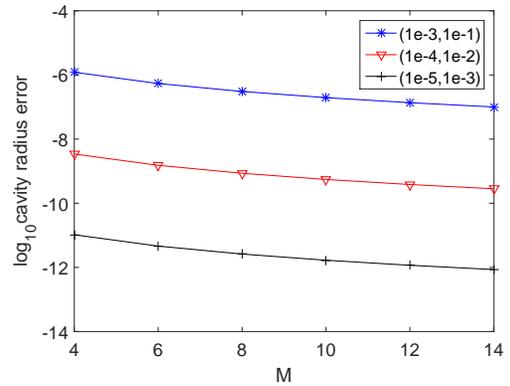}
}
\caption{The convergence behavior on $\Omega_{(\varepsilon,\gamma)}$ with small $\gamma$.}
\label{fig:subdomain}
\end{figure}

To explore the potential of the method in coupling with a domain decomposition method,
especially when combining with a finite element method in a multi-defects problem,
we examine the convergence behavior of our algorithm on a small neighbourhood of the defect.
Taking $\Omega_{(10^{-3},10^{-1})}$, $\Omega_{(10^{-4},10^{-2})}$ and
$\Omega_{(10^{-5},10^{-3})}$ as the reference
configurations and setting $\lambda \cdot \gamma=1.25$, we show in Figure~\ref{fig:subdomain}
the energy error and cavity radius error as a function of $M$ (with $N=16$ fixed),
where it is clearly seen that
high precision numerical results can be obtained with rather small $M$.

\subsection{Non-radially Symmetric Case}

For the non-radially symmetric case, we consider the circular ring reference configuration
$\Omega_{(\varepsilon,\gamma)}$ with oval boundary stretch
$\mathbf{u}_0 (\mathbf{x}) = \left[ \lambda_1 x_1, \lambda_2 x_2 \right]^T$,
$\lambda_1$, $\lambda_2>1$.
Assuming that the error of the numerical solution $\mathbf{U}^{NM}$ satisfies
\begin{align}
\label{eq:error}
q^{NM} \approx q^{\infty} + c_1 N^{-\nu_1} + c_2 M^{-\nu_2},
\end{align}
where $q^{\infty}$ and $q^{NM}$ represent the exact and numerical results of a
specific quantity, such as the elastic energy, semi-major axis and semi-minor axis etc.,
and $c_1$, $c_2$, $v_1$, $v_2$ are the corresponding parameters to be determined
by the least squares data fitting.

For $\Omega_{(\varepsilon,\gamma)}=\Omega_{(10^{-4},1)}$, $\lambda_1 = 2.4$ and
$\lambda_2 = 2$, we show in Figure~\ref{fig:nonsymmetry_N} the errors between
$\mathbf{U}^N$ and $\mathbf{U}^{1.5N}$ with $M=32$ fixed, and in
Figure~\ref{fig:nonsymmetry_M} the errors between
$\mathbf{U}^M$ and $\mathbf{U}^{1.25 M}$ with $N=16$ fixed, where
$L^2_{\omega}$ and $W^{1,p}$ represent $L^2_{\omega}(\Omega')$-norm and
$W^{1,p}(\Omega_{(\varepsilon,\gamma)})$-semi-norm respectively. The regressed quantities
and parameters are shown in Table~\ref{tab:fit}.

\begin{figure}[H]
\centering
\subfigure[non-radially-symmetric, $M=32$]{
\includegraphics[width=7.2cm,height=5.4cm]{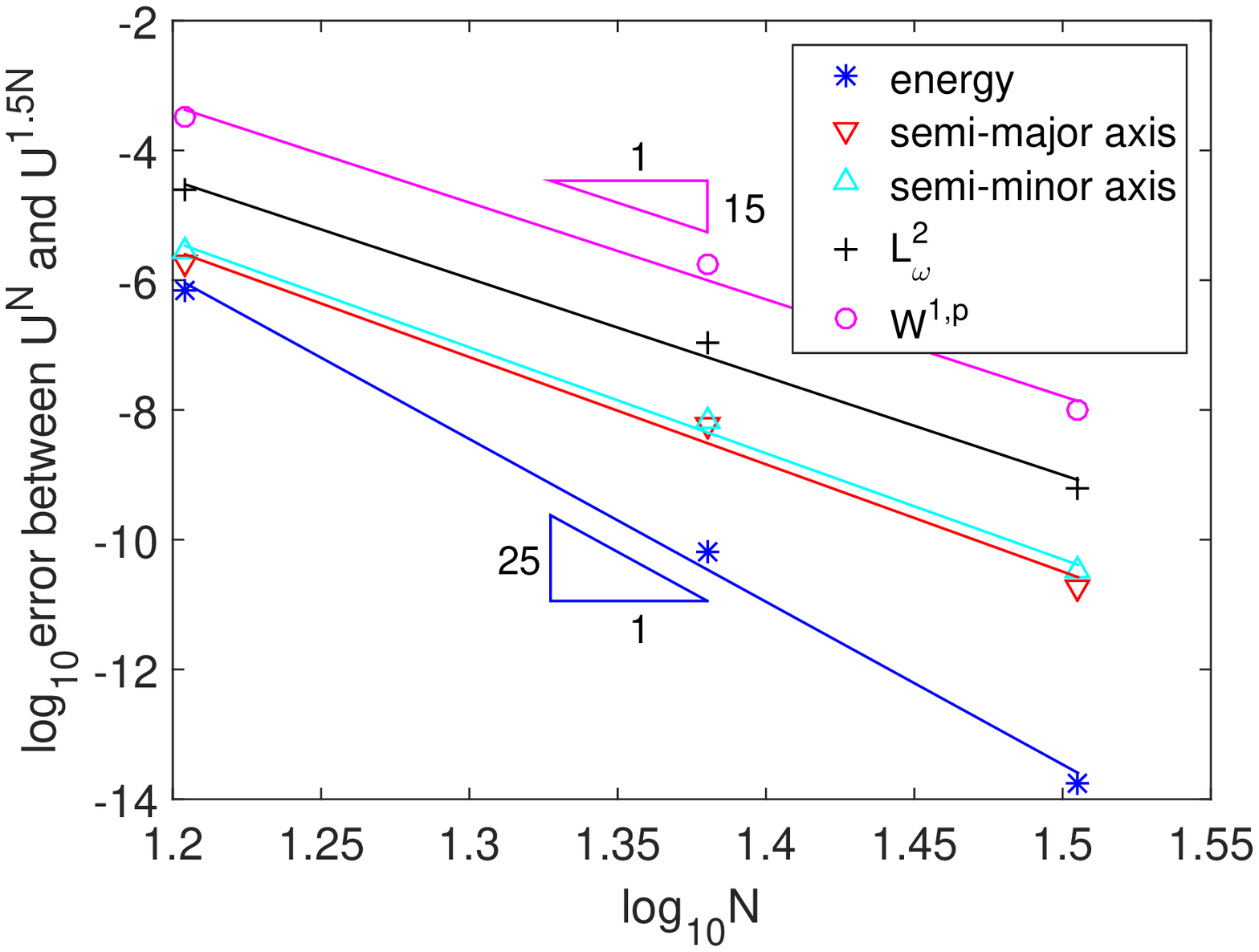}
\label{fig:nonsymmetry_N}
}
\subfigure[non-radially-symmetric, $N=16$]{
\includegraphics[width=7.2cm,height=5.4cm]{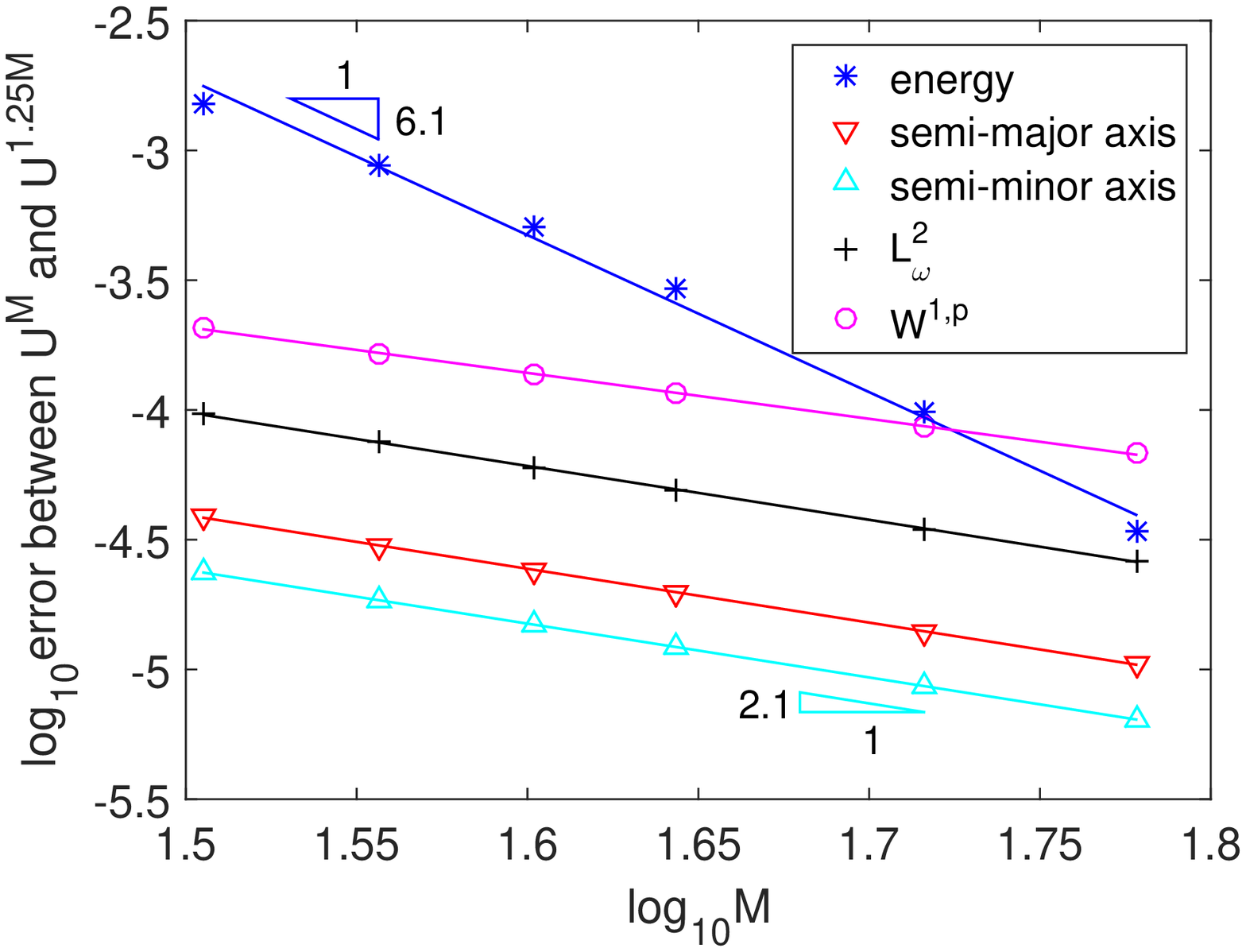}
\label{fig:nonsymmetry_M}
}
\caption{The convergence behavior of  the non-radially-symmetric $\mathbf{U}^{NM}$.}
\end{figure}

\begin{table}[H]
\centering
\footnotesize
\renewcommand{\arraystretch}{1.0}{
\caption{The regressed quantities and parameters for the non-radially-symmetric case.}
\begin{tabular*}{\textwidth}{@{\extracolsep{\fill}}@{~~}crrrrr}
\toprule 
$q$ & $c_1$ & $c_2$ & $\nu_1$ & $\nu_2$ & $q^{\infty}$ \\
\midrule 
energy          &  1.45e+24 & -3.10e+6 & 25 & 6.1 & 22.85959048 \\[-1mm]
semi-major axis & -2.02e+14 & -1.37e-1 & 17 & 2.1 &  1.67481624 \\[-1mm]
semi-minor axis & -1.61e+14 & -8.42e-2 & 16 & 2.1 &  1.42872097 \\
\bottomrule 
\end{tabular*}
\label{tab:fit} }
\end{table}

As a comparison, we show in Figure~\ref{fig:error_twoM} the corresponding errors obtained
in the same way for the radially-symmetric case with $\lambda=2$, and show in Table~\ref{tab:fit_s}
the regressed quantities and parameters. It is clearly seen that the regressed formula \eqref{eq:error}
produces quite sharp numerical results in the radially-symmetric case.

\begin{figure}[H]
\centering
\subfigure[symmetric, $\varepsilon = 10^{-3}$, $N=16$]{
\includegraphics[width=7.2cm,height=5.4cm]{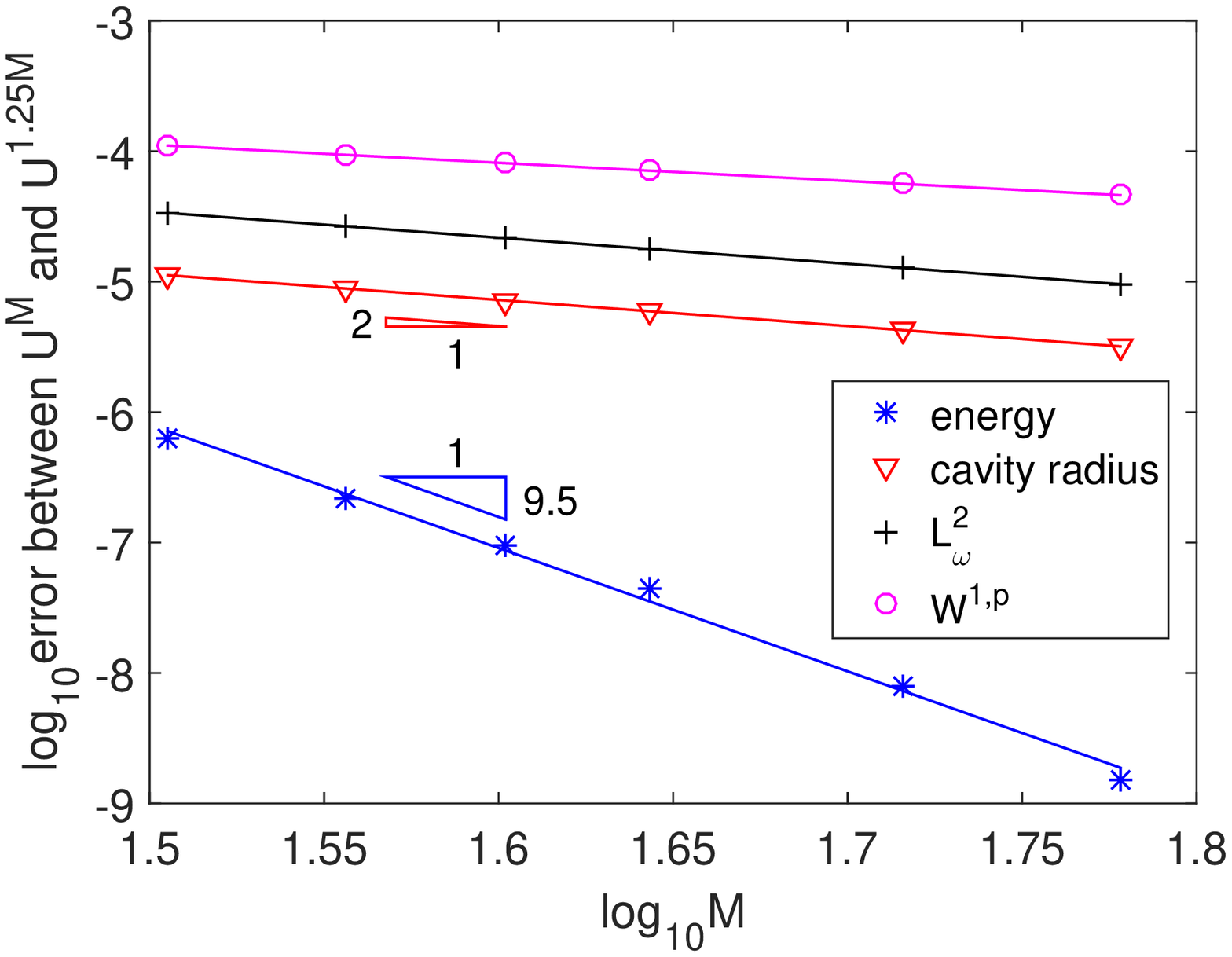}
}
\subfigure[symmetric, $\varepsilon = 10^{-4}$, $N=16$]{
\includegraphics[width=7.2cm,height=5.4cm]{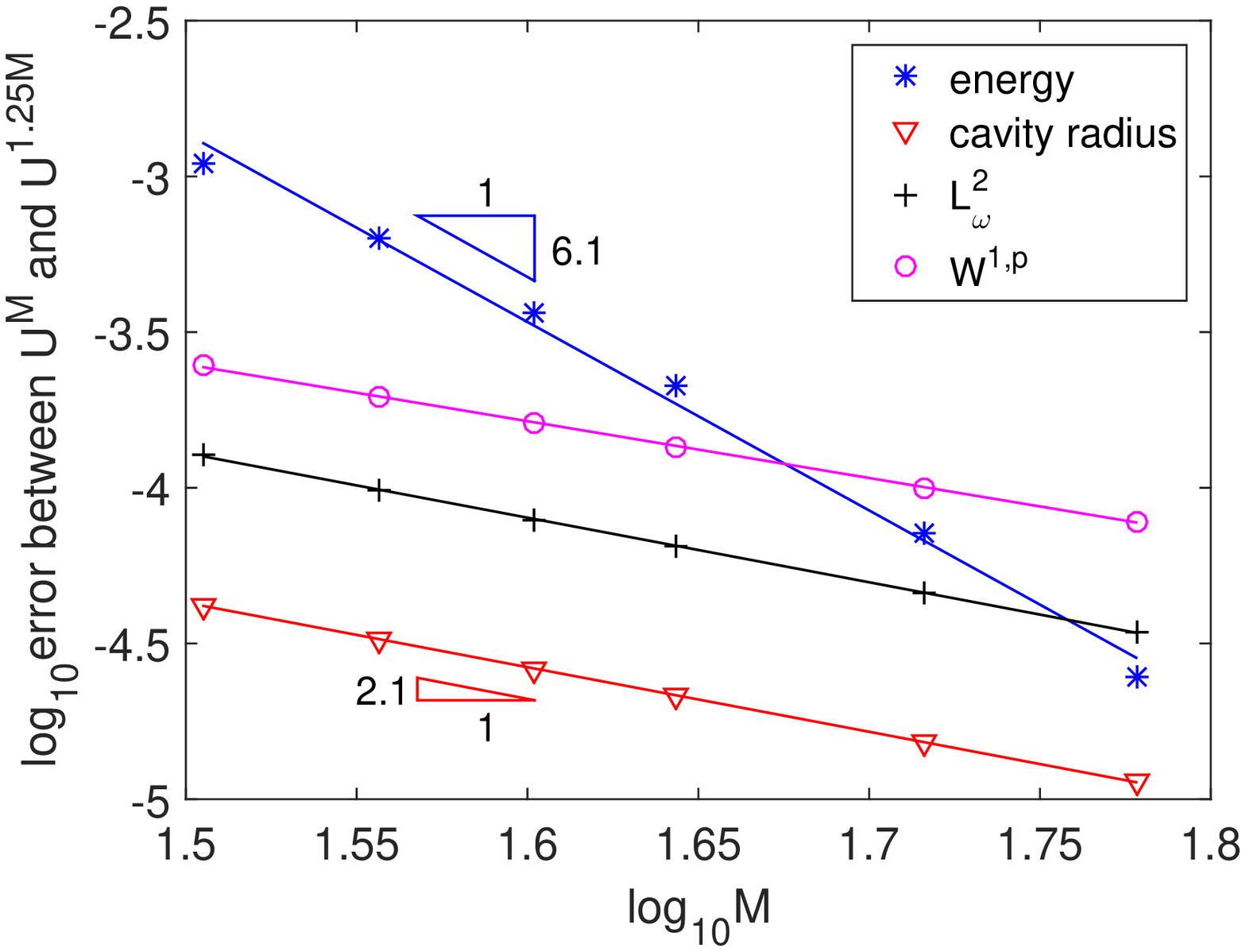}
}
\caption{The convergence behavior of  the radially-symmetric $\mathbf{U}^{NM}$.}
\label{fig:error_twoM}
\end{figure}

\begin{table}[H]
\centering
\footnotesize
\renewcommand{\arraystretch}{1.0}{
\caption{The regressed quantities and parameters for the radially-symmetric case.}
\begin{tabular*}{\textwidth}{@{\extracolsep{\fill}}@{~~}ccrrrr}
\toprule
$\varepsilon$ & $q$ & $c_2$ & $\nu_2$ & $q^{\infty}$ & ODE solution \\
\midrule
\multirow{2}{*}{ $10^{-3}$ } & energy &  -1.38e+8 & 9.5 & 18.61960091 & 18.61960090 \\[-1mm]
                      & cavity radius & -3.18e-2  & 2.0 &  1.26772534 &  1.26772534 \\[-1mm]
\multirow{2}{*}{ $10^{-4}$ } & energy &  -2.22e+6 & 6.1 & 18.87582778 & 18.87582146 \\[-1mm]
                      & cavity radius & -1.49e-1  & 2.1 &  1.25228561 &  1.25228643 \\[-1mm]
\bottomrule
\end{tabular*}
\label{tab:fit_s} }
\end{table}

To see how well the regressed formula \eqref{eq:error} fits the data, we show in
Figure~\ref{fig:fit_error_radius} the errors on the cavity dimensions, {\it i.e.}
the cavity radius in the radially-symmetric case and the cavity major and minor axes
in the non-radially-symmetric case, and in \ref{fig:fit_error_energy} the errors on
the elastic energy, between the corresponding quantities produced by the numerical
solution $\mathbf{U}^{NM}$ and the regressed formula \eqref{eq:error}
respectively. In particular, compare also to Figure~\ref{fig:error_ode_4}, it is clearly
seen that the regressed formula \eqref{eq:error} is highly accurate and reliable.

\begin{figure}[H]
\centering
\subfigure[errors on cavity dimensions]{
\includegraphics[width=7.2cm,height=5.4cm]{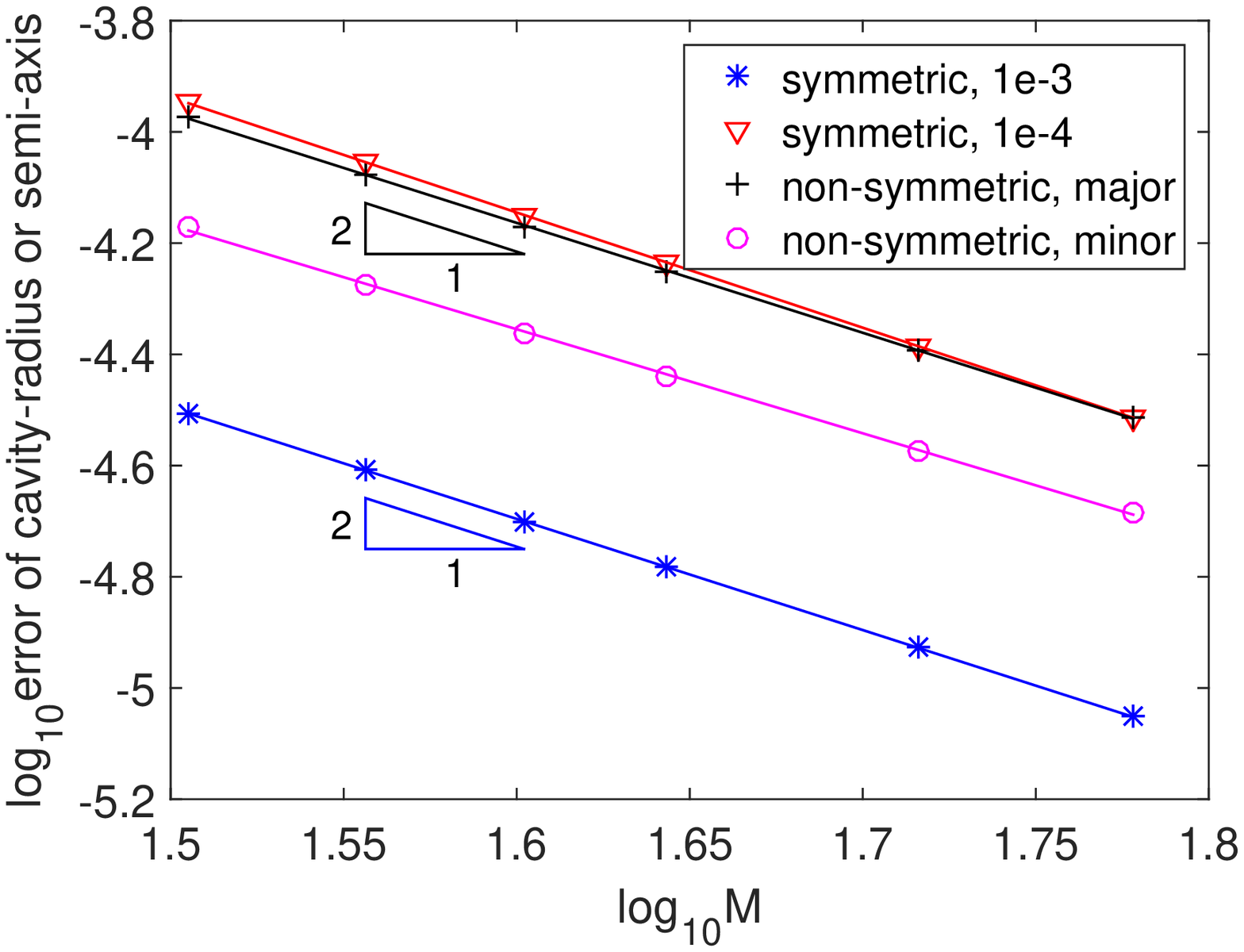}
\label{fig:fit_error_radius}
}
\subfigure[errors on on the elastic energy]{
\includegraphics[width=7.2cm,height=5.4cm]{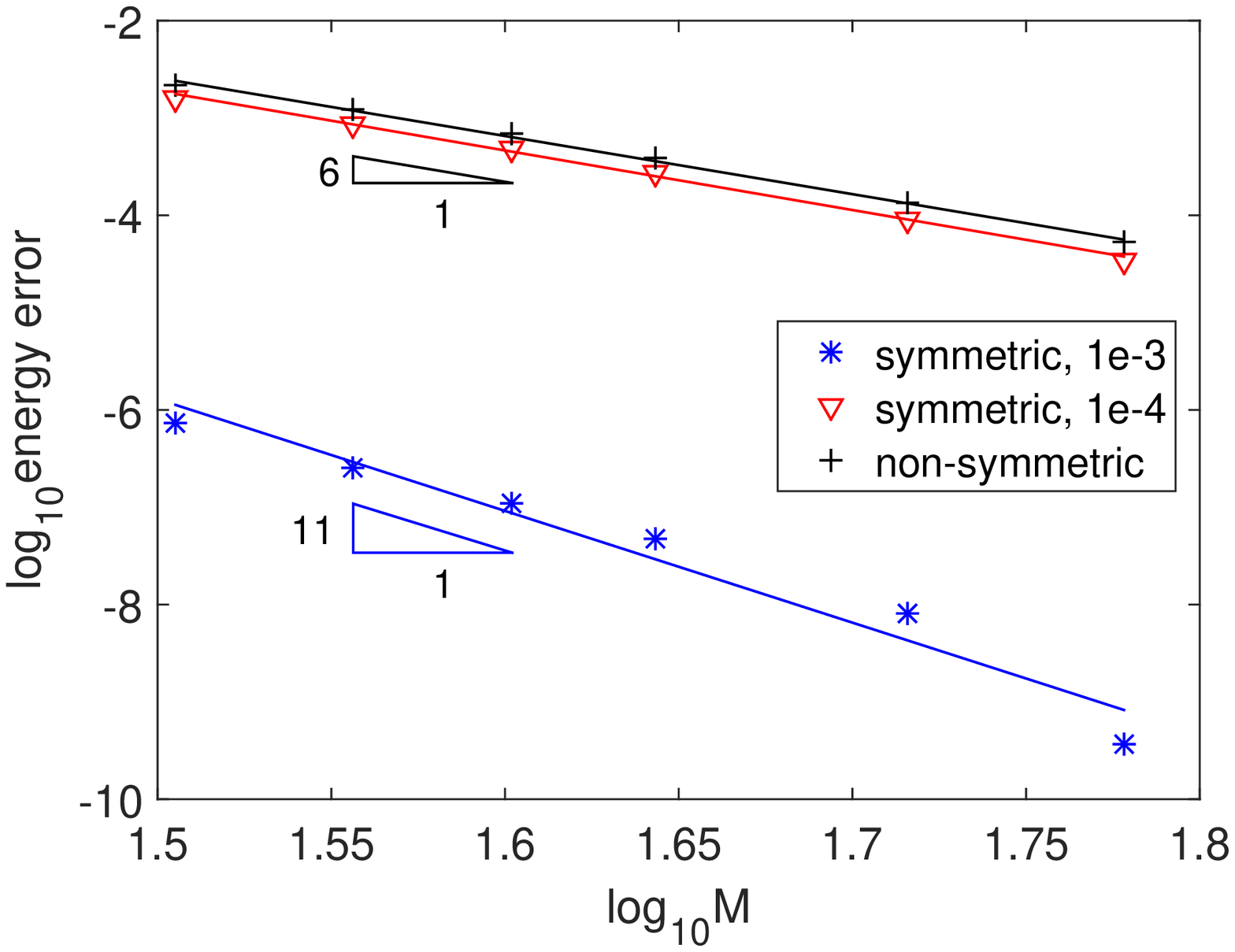}
\label{fig:fit_error_energy}
}
\caption{Errors on the key quantities produced by $\mathbf{U}^{NM}$ and the regressed data.}
\label{fig:fit_error}
\end{figure}

To examine how the axial ratio of the oval stretch affect the critical displacement,
we show in Figure~\ref{fig:lambda_crit_nonsymmetry} the semi-major and semi-minor axes
of the numerical cavity formed as functions of $\lambda_1$ for
$\lambda_1 / \lambda_2 = 1.2,1.3,1.4$, with
$\Omega_{(\varepsilon,\gamma)}=\Omega_{(10^{-4},1)}$, $N=16$ and $M=32$, where
it is obviously seen that both are monotonously increasing functions.

\begin{figure}[H]
\centering
\subfigure[semi-major axis]{
\includegraphics[width=7.2cm,height=5.4cm]{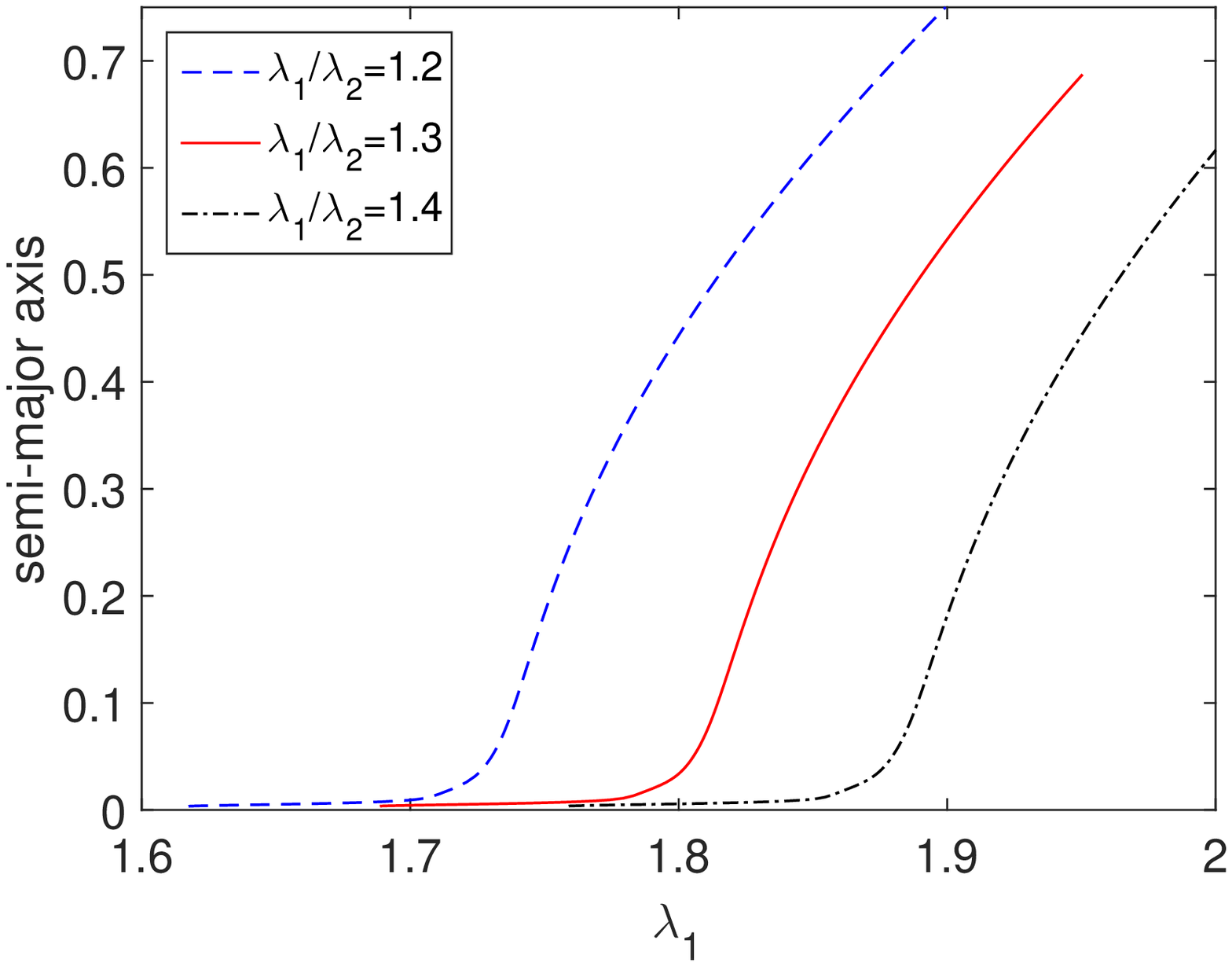}
}
\subfigure[semi-minor axis]{
\includegraphics[width=7.2cm,height=5.4cm]{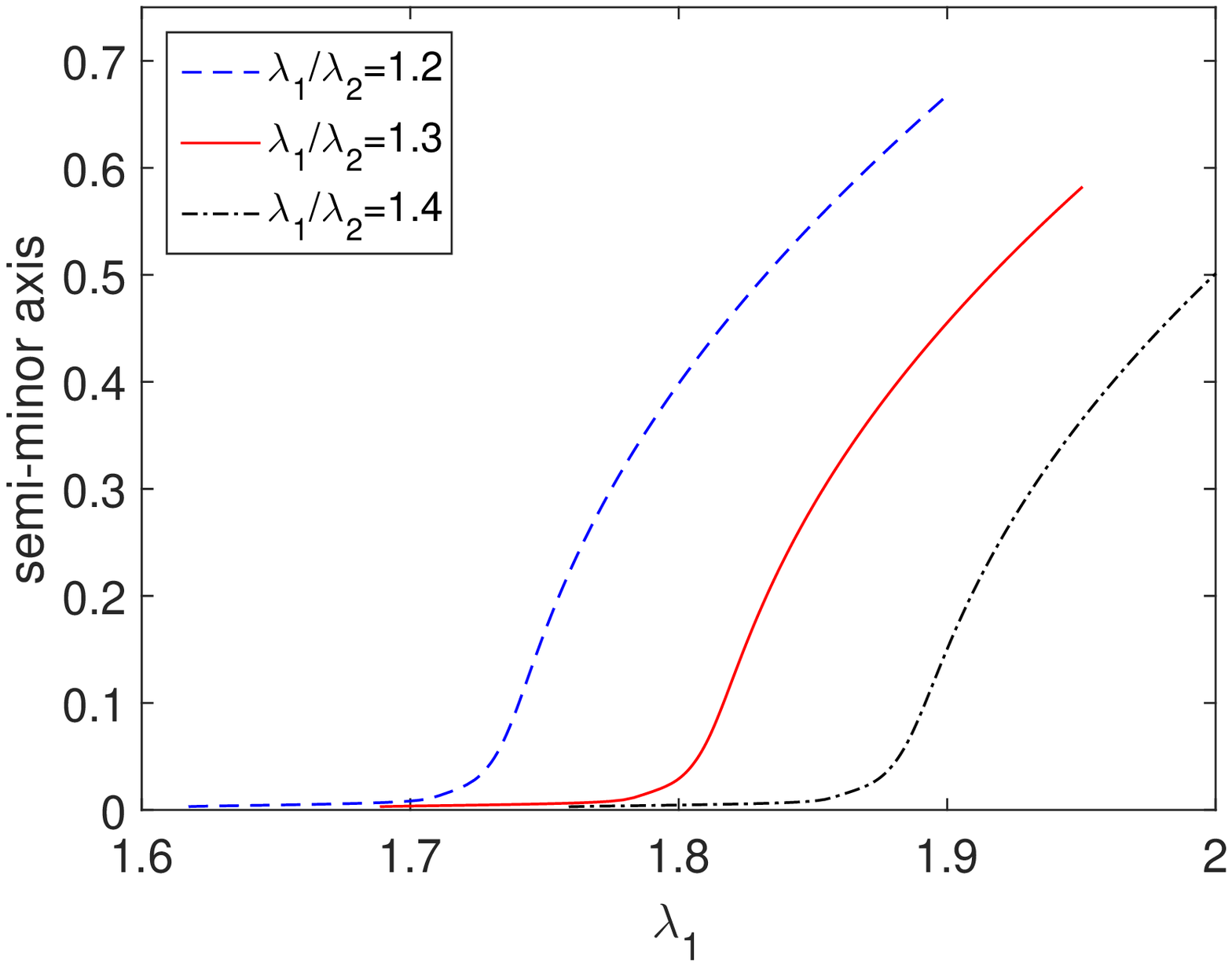}
}
\caption{The semi-major and semi-minor axes of the numerical cavity formed.}
\label{fig:lambda_crit_nonsymmetry}
\end{figure}

Taking $\Omega_{(\varepsilon,\gamma)}=\Omega_{(10^{-3},10^{-1})}$, $\Omega_{(10^{-4},10^{-2})}$,
$\Omega_{(10^{-5},10^{-3})}$ as the reference configuration
respectively, setting the oval boundary data $\lambda_1 \gamma=1.67$ and $\lambda_2 \gamma=1.42$
(see Table~\ref{tab:fit}),
for $N=16$ fixed, the errors of the corresponding non-radially-symmetric numerical solutions
$\mathbf{U}^{NM}$ are shown in with Figure~\ref{fig:subdomain_nonsym}, where the numerical
solution with $M=32$ is taken as the exact solution. It is clearly seen that
high precision numerical results can also be obtained in a neighborhood of a defect
with rather small $M$ in non-radially-symmetric case.

\begin{figure}[H]
\centering
\subfigure[non-radially-symmetric, energy error]{
\includegraphics[width=7.2cm,height=5.4cm]{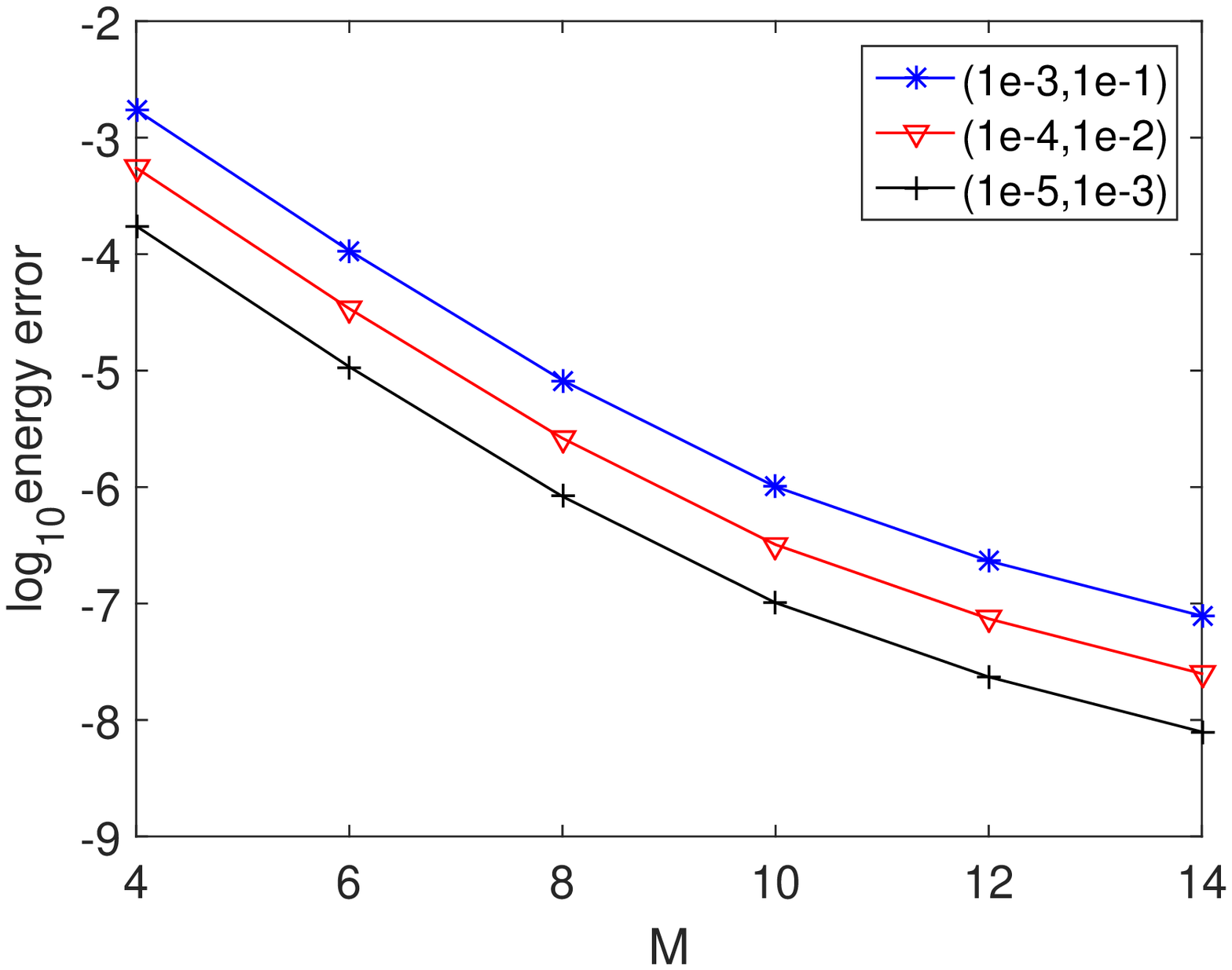}
}
\subfigure[semi-major axis error]{
\includegraphics[width=7.2cm,height=5.4cm]{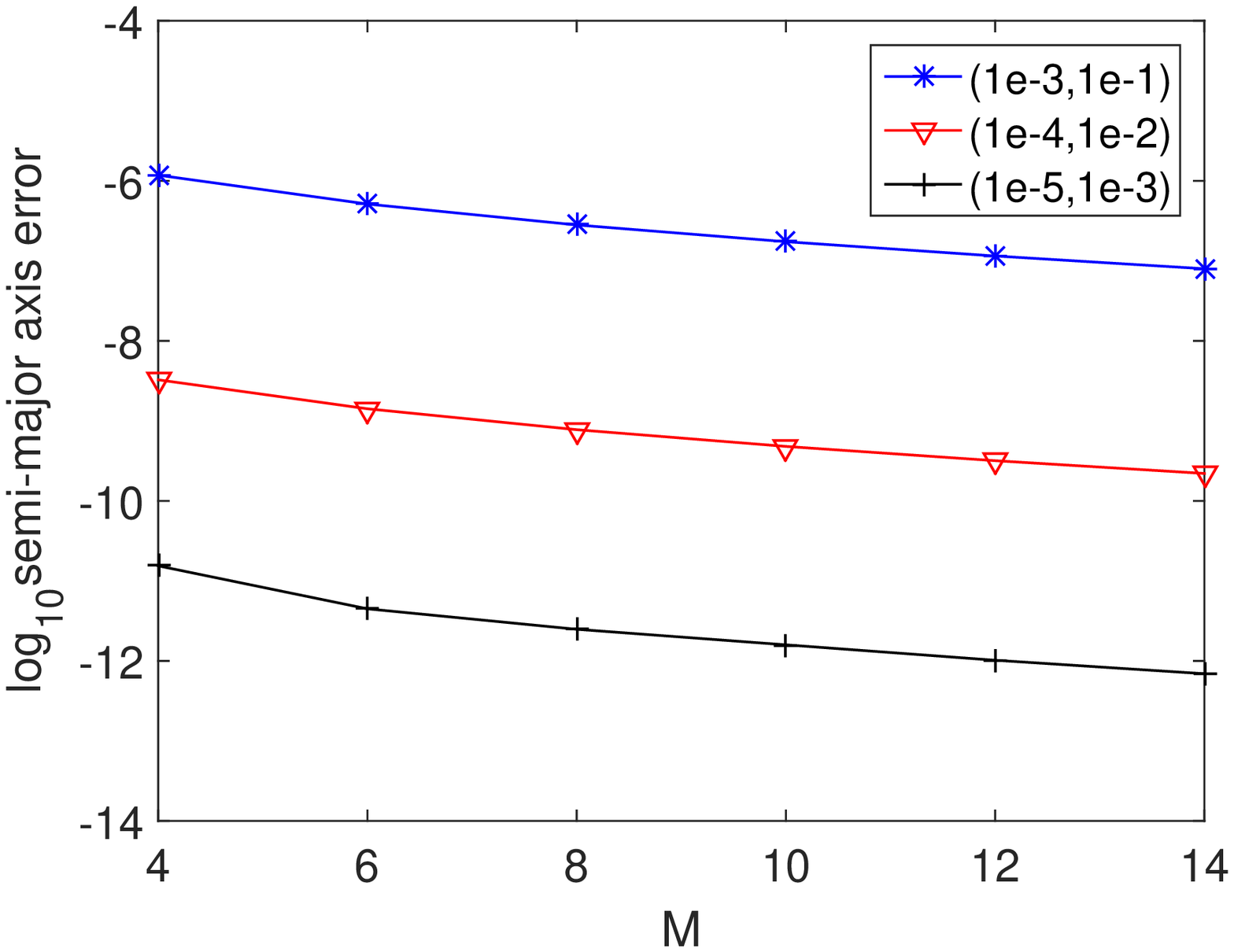}
}
\caption{The convergence behavior on $\Omega_{(\varepsilon,\gamma)}$ with small $\gamma$.}
\label{fig:subdomain_nonsym}
\end{figure}


\begin{thebibliography}{99}

\bibitem{GentLindley1958} Gent, A. N. and Lindley, P. B. Internal rupture
of bonded rubber cylinders in tension. \emph{Proceedings of the Royal Society.
Series A}, \textbf{249}, 195--205 (1958)

\bibitem{GentPark1984} Gent, A. N. and Park, B. Failure processes in
elastomers at or near a rigid spherical inclusion. \emph{Journal of Materials
Science}, \textbf{19}, 1947--1956 (1984)

\bibitem{Bucknall1993} Lazzeri, A. and Bucknall, C. B. Dilatational bands in
rubber-toughened polymers. \emph{Jounal of Materials Science}, \textbf{28},
6799--6808 (1993)

\bibitem{Bucknall1994} Bucknall, C. B., Karpodinis, A. and Zhang, X. A model
for particle cavitation in rubber toughened plastics. \emph{Journal of
Materials Science}, \textbf{29}, 3377--3383 (1994)

\bibitem{Ball1982} Ball, J. M. Discontinuous equilibrium solutions and cavitation
in nonlinear elasticity. \emph{Philosophical Transactions of the Royal Society
of London. Series A, Mathematical and Physical Sciences}, \textbf{306}, 557--611 (1982)

\bibitem{Sivaloganathan2002} Sivaloganathan, J. and Spector, S. J. On cavitation,
configurational forces and implications for fracture in a nonlinearly elastic
material. \emph{Journal of Elasticity}, \textbf{67}, 25--49 (2002)

\bibitem{Sivaloganathan2006} Sivaloganathan, J., Spector, S. J. and Tilakraj,
V. The convergence of regularized minimizers for cavitation problems in nonlinear
elasticity. \emph{Siam Journal on Applied Mathematics}, \textbf{66}, 736--757 (2006)

\bibitem{Henao2009} Henao, D. Cavitation, invertibility, and convergence of
regularized minimizers in nonlinear elasticity. \emph{Journal of Elasticity},
\textbf{94}, 55--68 (2009)

\bibitem{Lavrentiv} Lavrentiev, M. Sur quelques probl\`{e}mes du calcul
des variations. \emph{Annali di Matematica Pura ed Applicata},
\textbf{4}, 7--28 (1927)

\bibitem{XuHenao} Xu, X. and Henao, D. An efficient numerical method
for cavitation in nonlinear elasticity. \emph{Mathematical Models
and Methods in Applied Sciences}, \textbf{21}, 1733--1760 (2011)

\bibitem{LianLi2011dual} Lian, Y. and Li, Z. A dual-parametric finite
element method for cavitation in nonlinear elasticity. \emph{Journal
of Computational and Applied Mathematics}, \textbf{236}, 834--842 (2011)

\bibitem{LianLi2011force} Lian, Y. and Li, Z. A numerical study on cavitations
in nonlinear elasticity-defects and configurational forces. \emph{Mathematical
Models and Methods in Applied Sciences}, \textbf{21}, 2551--2574 (2011)

\bibitem{SuLi} Su, C. and Li, Z. Error analysis of a dual-parametric bi-quadratic
FEM in cavitation computation in elasticity. \emph{Siam Journal on Numerical
Analysis}, \textbf{53}, 1629--1649 (2015)

\bibitem{NegronMarrero} Negr\'{o}n-Marrero, P. V. and Betancourt, O. The
numerical computation of sigular minimizers in two-dimensional elasticity.
\emph{Journal of Computational Physics}, \textbf{113}, 291--303 (1994)

\bibitem{PietroStefania2013} Celada, P. and Perrotta, S. Polyconvex energies
and cavitation. \emph{Nonlinear Differential Equations and Applications},
\textbf{20}, 295--321 (2013)

\bibitem{LiGuo} Li, J. and Guo, B. Fourier-Chebyshev pseudospectral method
for three-dimensional Navier-Stokes equations. \emph{Japan Journal of
Industrial and Applied Mathematics}, \textbf{14}, 329--356 (1997)

\bibitem{Shen} Shen, J. A new fast Chebyshev-Fourier algorithm for Poisson-type
equations in polar geometries. \emph{Applied Numerical Mathematics},
\textbf{33}, 183--190 (2000)

\bibitem{ShenTang} Shen, J., Tang, T. and Wang, L. \emph{Spectral Methods:
Algorithms, Analysis and Applications}, Springer, Heidelberg, 154--157 (2011)

\bibitem{Gibbs} Gottlieb, S., Jung, J. H. and Kim, S. A review of David
Gottlieb’s work on the resolution of the Gibbs phenomenon.
\emph{Communications in Computational Physics}, \textbf{9}, 497--519 (2011)

\bibitem{quasiNewton} Nocedal, J. and Wright, S. J. \emph{Numerical Optimization},
Springer, New York, 31-45 (1999)

\bibitem{Su2015} Su, C. \emph{The Numerical Analysis of Iso-parametric and
Dual-parametric Finite Element Methods Applied in Cavitation} (in Chinese),
Ph. D. dissertation, Peking University, 73--74 (2015)

\bibitem{Poincare} Morrey, C. B. Poincar\'{e} inequality.
\emph{Multiple Integrals in The Calculus of Variations}, Springer, New York (1966)

\bibitem{Poussin} Meyer, P. A. De La Vall\'{e}e Poussin theorem.
\emph{Probability and potentials}, Blaisdell Publishing Company, London (1966)

\bibitem{Henao2011} Henao, D. and Mora-Corral, C. Fracture surfaces and
the regularity of inverses for bv deformations. \emph{Archive for
Rational Mechanics and Analysis}, \textbf{201}, 575--629 (2011)

\bibitem{Henao2010} Henao, D. and Mora-Corral, C. Invertibility and weak
continuity of the determinant for the modelling of cavitation and fracture
in nonlinear elasticity. \emph{Archive for Rational Mechanics and Analysis},
\textbf{197}, 619--655 (2010)

\bibitem{Ball1981} Ball, J. M., Currie, J. C. and Olver, P. J. Null Lagrangians,
weak continuity, and variational problems of arbitary order. \emph{Journal of
Functional Analysis}, \textbf{41}, 135--174 (1981)

\bibitem{Adams1975} Adams, R. A. \emph{Sobolev Spaces}, Academic Press,
New York, 40--45 (1975)

\bibitem{Brezis1983} Brezis, H. \emph{Analyse Fonctionnelle: Th\'{e}orie et
Applications}, Masson, Paris, 53--54 (1983)

\bibitem{Sivaloganathan2009} Sivaloganathan, J. The numerical computation
of the critical boundary displacement for radial cavitation.
\emph{Mathematics and Mechanics of Solids}, \textbf{14}, 696--726 (2009)

\end{thebibliography}

\bibliographystyle{unsrtnat}

\addcontentsline{toc}{chapter}{\bibname}

\end{document}